\theparentequation\alph{equation}}
\definecolor{darkblue}{rgb}{0,0,0.8}
\definecolor{darkgreen}{rgb}{0,0.8,0}
\definecolor{magenta}{rgb}{0.5,0,0.5}
\newcommand{\mathleft}{\@fleqntrue\@mathmargin0pt}
\newtheorem{theorem}{Theorem}[section]
\newtheorem{assumption}{Assumption}[section]
\newtheorem{proposition}{Proposition}[section]
\newtheorem{remark}{Remark}[section]
\newtheorem{notation}{Notation}[section]
\providecommand{\keywords}[1]
{
  \small	
  \textbf{\textit{Keywords---}} #1
}
\begin{document}
\title{First-passage time for PDifMPs: an Exact simulation approach for time-varying thresholds}
\author[1]{Sascha Desmettre\thanks{Email: sascha.desmettre@jku.at}}
\author[2]{Devika Khurana\thanks{devika.khurana@jku.at}}
\author[3]{Amira Meddah\thanks{amira.meddah@jku.at}}
\affil[1]{\centerline{\small Institute of Financial Mathematics and Applied Number Theory, Johannes Kepler University, Linz}}
\affil[2]{\centerline{\small Institute of Numerical Mathematics, Johannes Kepler University, Linz}}
\affil[3]{\centerline{\small Institute of Stochastics, Johannes Kepler University, Linz}}
\date{\today}

\newcommand{\dkbox}[1]{
\fbox{
    \parbox{0.9\textwidth}{%
        $\triangleright$\textcolor{brown}{\textbf{Devika}:} #1
        }
 }}

\newcommand{\sdbox}[1]{\fbox{$\triangleright$\textcolor{blue}{\textbf{Sascha}:} #1}}
\newcommand{\amebbox}[1]{{
\fbox{
\parbox{0.9\textwidth}{  \fbox{$\triangleright$\asd{\textbf{Sascha}:}} 
#1
}}}}


\maketitle
\begin{abstract}
    Piecewise Diffusion Markov Processes (PDifMPs) are valuable for modelling systems where continuous dynamics are  interrupted by sudden shifts and/or changes in drift and diffusion. The first-passage time (FPT) in such models plays a central role in understanding when a process first reaches a critical boundary. In many systems, time-dependent thresholds provide a flexible framework for reflecting evolving conditions, making them essential for realistic modelling.
    We propose a hybrid exact simulation scheme for computing the FPT of PDifMPs to time-dependent thresholds. Exact methods traditionally exist for pure diffusions, using Brownian motion as an auxiliary process and accepting sampled paths with a probability weight. Between jumps, the PDifMP evolves as a diffusion, allowing us to apply the exact method within each inter-jump interval. The main challenge arises when no threshold crossing is detected in an interval: We then need the value of the process at the jump time, and for that, we introduce an approach to simulate a conditionally constrained auxiliary process and derive the corresponding acceptance probability. Furthermore, we prove the convergence of the method and illustrate it using numerical examples.
\end{abstract}

\keywords{First-passage time, \and Exact simulation, \and Piecewise diffusion Markov processes, \and Time- \\ dependent thresholds, Hybrid rejection sampling } \\ 
\textbf{\textit{MSC 2010 Classification}}---- 37M05, 65C20, 60G05, 60H35, 68Q87
\section{Introduction}

The first-passage time of a stochastic process represents the earliest time instance at which the process reaches a specific threshold for the first time. It is a fundamental concept widely used to describe critical events in various applications. The significance of FPTs lies in their ability to capture and characterise meaningful transitions or decisive events. For instance, in neuroscience and in biology, FPTs characterise the timing of essential neuronal activities, such as when a neuron's membrane potential reaches or surpasses a firing threshold, signalling events like the initiation of neural spikes or bursts of activity, e.g., \cite{braun2015first, engel2007firing, lo2006first, chou2014first}. In finance, they are essential for example in pricing barrier options and managing risk, marking the precise time when an asset price reaches a crucial market level that triggers investment decisions, we refer for example to \cite{hieber2012note, fernandez2013double, kammer2007general}.\\
Exact simulation method for first-passage time aim to generate samples from the true distribution without introducing time discretisation errors. Unlike numerical schemes that approximate trajectories on fixed time grids, Exact methods typically rely on probabilistic constructions or change-of-measure technique that allow one to simulate hitting times directly. The development of these methods began with foundational work by Beskos et al., who introduced a rejection sampling framework based on Girsanov’s transformation to simulate finite points from the path of a continuous stochastic differential equation (SDE). The core idea was to use Brownian motion as an auxiliary process and accept or reject its trajectory as a valid realisation of the diffusion process, with a probability computed via Girsanov’s transformation, cf. \cite{beskos2005exact}.

This idea was later adapted to directly simulate the FPT of a diffusion to a constant threshold, using a version of Girsanov’s transformation formulated for stopping times, cf. \cite{herrmann2019exact}. Extending the method to time-dependent thresholds presented additional challenges, as obtaining the required probability from Girsanov’s transformation was not straightforward. These challenges were addressed in \cite{khurana2024exact}. The method is advantageous for simulating FPTs as it eliminates the need to simulate the entire path of the diffusion, avoids time discretisation errors, and does not rely on specific time intervals.\\
While the Exact simulation method is well established for pure continuous diffusion, many real-world systems exhibit abrupt changes, motivating the study of jump diffusion processes. Jump diffusion processes were first introduced in \cite{merton1976option} by Merton in 1976 to address limitations in modelling financial asset prices using pure diffusion models. Merton recognised that standard Brownian motion could not capture sudden, significant changes observed in financial markets. To overcome this, the author proposed a model that combined continuous Brownian motion with jumps governed by a Poisson process, allowing for a more accurate representation of asset price dynamics. Since Merton's pioneering work, research on jump diffusion processes has expanded considerably, finding applications in diverse fields such as finance, physics, biology and engineering \cite{jang2007jump, giraudo1997jump}.  A major challenge in this context is the computation of first-passage times, which are critical in applications such as credit risk, neuroscience, and failure prediction. Recently, Herrmann and Massin in \cite{herrmann2023exact} proposed an algorithm that exactly simulates the FPT through a constant threshold in one-dimensional jump diffusion processes, that uses the classical Exact simulation method developed for continuous diffusion. Their approach reconstructs the jump diffusion by treating the jump times and jump sizes separately from the continuous dynamics between jumps. The classical Exact simulation method is used to compute the FPT for the continuous part. If this FPT occurs before the next jump, it is also the FPT of the full process. Otherwise, the value of the continuous part at the jump time is simulated, the jump size is added, and the continuous process is reinitialised from this new point to continue the FPT computation. This step implicitly relies on simulating the continuous process conditioned to remain below the threshold until the next jump time. For constant thresholds, the distribution of the auxiliary process—a Brownian motion conditioned in this way, is known. And thus one can perform rejection sampling.\\
Analytical expressions for the FPT density of jump diffusions are generally unavailable, except when the jump size follows a doubly exponential distribution \cite{kou2003first} or is non-negative with the boundary below the initial value \cite{blake2003level}. The main challenge lies in the possibility of an overshoot when the process crosses the boundary. While the overshoot distribution is tractable for exponential jumps due to the memoryless property, it becomes difficult to handle for more general jump size distributions. When analytical results are unavailable, simulation-based approaches are often used. One possibility is to simulate full sample paths using numerical methods such as Runge-Kutta or Euler-type schemes \cite{buckwar2011runge, higham2005numerical}, and compute the FPT as the by product. Another direction avoids modelling jumps explicitly by considering a piecewise-defined threshold instead; in \cite{abundo2010first}, the author shows that the hitting time in this setting satisfies a certain integral equation, which is then solved numerically. A different approach, introduced in \cite{atiya2005efficient}, proposes two Monte Carlo methods for Brownian motion. A key distinction of this method is that it requires only a few simulation points per iteration and avoids discretisation bias.\\
FPT problems for jump diffusions with time-varying thresholds have also been explored in \cite{abundo2000first, tuckwell1976first}.\\
As the need to model more complex systems grew, jump diffusion models were included into the broader framework of general stochastic hybrid systems (GSHS), also called piecewise diffusion Markov processes (PDifMPs). These systems integrate stochastic differential equations with discrete events, allowing rigorous analysis of systems where random jumps, mode switches or resets interrupt continuous evolution. PDifMPs generalise jump diffusions by allowing the drift and diffusion coefficients of the continuous part to depend on the discrete jump component, thus reflecting the effects of jumps more broadly. PDifMPs have proven particularly useful in domains where such hybrid behaviours occur naturally, including but not limited to biology, e.g. \cite{buckwar2025numerical, meddah2023stochastic, debussche2018modelisation}, and financial systems, e.g. \cite{buckwar2025american}.\\
In this paper, we focus on FPT problem for PDifMPs with a time-varying threshold. As PDifMPs provide a flexible and general framework for modelling systems with both continuous evolution and discrete transitions, they are well-suited for a broad range of modern applications. Despite their relevance, FPT methods tailored to PDifMPs remain unexplored, partly because they have been developed recently and their inherent complexity. While time-discretisation schemes can be applied , cf. \cite{buckwar2025numerical}, they tend to overestimate FPTs unless very fine step sizes are used—leading to increased variance, a known issue. Analytical approaches are difficult to implement, as the drift and diffusion coefficients typically change after each jump. Of particular interest is the Exact simulation method introduced in \cite{herrmann2023exact} for jump diffusions, which can be adapted to PDifMPs by accounting for these post-jump changes in the drift and diffusion. 
Extending this framework to time-dependent thresholds presents significant challenges, which we address in this work. In this case, the distribution of the conditioned auxiliary process is no longer available, making it difficult to simulate the value of the continuous part exactly at the jump time when its FPT exceeds or coincides with the next jump. To overcome this, we develop a novel approach to simulate the auxiliary process and derive the corresponding rejection probability.

The paper is organised as follows: In Section~\ref{general setting}, we explain the basics of PDifMPs. Section~\ref{section 3} sets the algorithmic foundations for the computation of FPTs for time-varying thresholds by developing a hybrid scheme for the continuous and the jump parts of a PDifMP. Moreover, we prove well-definedness and convergence of our method. We illustrate our algorithm by examples in Section~\ref{Numa_exp}. Section~\ref{sec:outlook} concludes. 
\section{Generalised PDifMPs}\label{general setting}
\noindent
In this section, we present a generalised formulation of the PDifMP, extending the classical definition introduced in \cite{bujorianu2006toward, meddah2024stochastic, bect2007processus}. This generalised version allows for discontinuities in the PDifMP solution at jump times, thereby broadening the modelling scope of such processes. We provide a detailed description of the structural elements and dynamical mechanisms that characterise the process, which will serve as the foundational framework for the developments in the remainder of the paper.\\
Consider a filtered probability space $(\Omega, \mathcal{F}, (\mathcal{F}_t)_{t \geq 0}, \mathbb{P})$, where the filtration $(\mathcal{F}_t)_{t \geq 0}$ on $\Omega$ satisfying the usual conditions, i.e., $(\mathcal{F}_t)_{t \geq 0}$ is right continuous and $\mathcal{F}_0$ contains all $\mathbb{P}$-null sets.  Let $(B_t)_{t\geq 0} \in \mathbb{R}^m$, $m \in \mathbb{N}$, be a standard Brownian motion defined on this space. The process $B_t$ is assumed to be $\mathcal{F}_t$-adapted.\\
Let $ E \subset \mathbb{R}^n $ denote the state space, defined as $ E = E_1 \times E_2 $, where $ E_1$ and $ E_2 $ are subsets of $\mathbb{R}^{n_1}$ and $\mathbb{R}^{n_2}$, with $n_1+n_2=n$. We equip the state space $E$ with the Borel $\sigma$-algebra $\mathcal{B}(E)$ generated by the product topology of $E_1$ and $E_2$.\\
A generalised PDifMP is a c\`adl\`ag\footnote{These are continuous-time
stochastic processes with sample paths that are almost surely everywhere right continuous with limits from the left existing everywhere.} stochastic process $(U_t)_{t \geq 0}$ taking values in $E$, and written as $U_t = (Y_t, Z_t)$, where $Y_t$ represents the continuous component and $Z_t$ the jump component. We assume that $Z_t$ is piecewise constant and right-continuous, with random jump times $(T_i)_{i \in \mathbb{N}}$, where we set $T_0=0$ by convention. Both components of $(U_t)_{t \geq 0}$ are assumed to be non-exploding. The process $U_t$ is characterised by three local characteristics $(\phi, \lambda, \mathcal{Q})$. The function $\phi$ defines the stochastic flow of the continuous component, while the jump behaviour is described by the rate function $\lambda$ and the transition kernel $\mathcal{Q}$. The dynamics of the PDifMP $(U_t)_{t \geq 0}$ are defined by the characteristic triple as follows.

\subsubsection*{Piecewise continuous dynamics}
\noindent The sequence of jump times $(T_i)_{i \in \mathbb{N}}$ defines a sequence of randomly distributed grid points on $\mathbb{R}_{+}$. Between these jump times, the process $(U_t)_{t \geq 0} $ evolves according to a stochastic flow $ \phi $, defined recursively. For each $ i \in \mathbb{N}_0$\footnote{Here, $ \mathbb{N}_0 = \mathbb{N} \cup \{0\} $ denotes the set of non-negative integers.}, let $ u_i := (y_i, z_i) \in E $ denote the state of the process at time $ T_i $. Then, for all $ t \in [T_i, T_{i+1}) $, the dynamics of $(U_t)_{t \geq 0} $ are given by
\begin{equation*}
\phi(t, u_i) := Y_t^i,
\end{equation*}
where $ Y^i = (Y_t^i)_{t \geq T_i} $ solves the SDE associated with the continuous dynamics on that interval, given by
\begin{equation}
\label{eq: main SDE}
dY^i_t = \mu(t, (Y^i_t, z_i))\, dt + \sigma(t, (Y^i_t, z_i))\, dB_t, \qquad t \in [T_i, T_{i+1}),
\end{equation}
with initial condition at times $T_i$ given by
\begin{equation*}
Y^i_{T_i} = y_i.
\end{equation*}
\begin{notation}
On each inter-jump interval $[T_i, T_{i+1})$, the continuous evolution of the process is denoted by $ Y^i_t $, which solves the SDE \eqref{eq: main SDE} with the discrete state $ z_i $ held fixed, i.e. $Z_t=z_i$ for all $t\in[T_i,T_{i+1})$. The variable $ y_i = Y_{T_i} $ represents the value of the continuous component at the start of the interval. Further, an arbitrary element of the state space $ E $ is denoted by $ u = (y, z)$.
\end{notation}
\begin{assumption}
    The functions $\mu : \mathbb{R}_{+} \times E\to \mathbb{R}^{n_1}$ and $\sigma : \mathbb{R}_{+} \times E\to \mathbb{R}^{n_1 \times m}$ are globally Lipschitz continuous and satisfy linear growth conditions with respect to the state variable $y \in E_1$, uniformly in $t \in \mathbb{R}_{+}$ and $z \in E_2$. Under these conditions, the SDE \eqref{eq: main SDE} has a unique strong solution on each inter-jump interval $ [T_i, T_{i+1}) $.
    \label{existn_&uniq}
\end{assumption}
\subsubsection*{Jump Mechanism}

\noindent
The jump dynamics of the PDifMP $(U_t)_{t\geq 0}$ are governed by the remaining two components of its characteristic triple, namely, the jump rate function $ \lambda$ and the transition kernel $ \mathcal{Q}$.

\begin{assumption}
\label{eq:lambda_integrability}
Let  $ \lambda: E \to \mathbb{R}_+ $ be a measurable function satisfying the following integrability condition
 \begin{equation*}
       \forall \, u\, \in E, \, \, \int_0^\infty \lambda\big(\phi(t, u), z\big)\,dt < \infty.
    \end{equation*}
\end{assumption}
\noindent This ensures that the expected number of jumps remains finite over any finite time horizon.
\noindent Given any initial state $ u_i = (y_i, z_i) \in E $, the jump times $T_i$ are random variable with associated  survival function given by
\begin{equation*}
  \mathcal{S}(t, u_i)= \mathbb{P}_{u_i}(T_{i+1} > t) := \exp\left(-\int_{T_i}^t \lambda\big(\phi(h, u_i), z_i\big)\, dh\right), \qquad t \geq T_i,
\end{equation*}
which expresses the probability that no jump occurs up to time $ t $. If $ T_{i+1} = \infty $, the process evolves continuously according to the stochastic flow
\begin{equation*}
U_t = \big(\phi(t, u_i), z_i\big), \qquad \text{for all } t \geq 0.    
\end{equation*}

\medskip

\noindent If instead $ T_{i+1} < \infty$, the process undergoes a jump. The continuous component is updated by applying a jump size to its pre-jump value:
\begin{equation*}
y_{i+1} := Y^{i}_{T_{i+1}} + j(T_{i+1}, u_i),
\end{equation*}
\noindent where $j : \mathbb{R}_{+} \times E \to \mathbb{R}^{n_1}$ represents the size of the jump. To ensure the well-posedness of the jump updates, we assume the following condition.
\begin{assumption}
\label{ass:jump-size}
The jump size function $ j : \mathbb{R}_+ \times E \to \mathbb{R}^{n_1} $ is measurable and satisfies a linear growth condition in $ y \in E_1 $, uniformly in $ t \in \mathbb{R}_+ $ and $ z \in E_2 $, that is, there exists a positive constant $C$ such that
\begin{equation*}
   \| j\big(t, (y, z)\big) \| \leq C\big(1 + \| y \|\big). 
\end{equation*}
\end{assumption}
\noindent The new value of the discrete component is determined via a Markov kernel $ \mathcal{Q} $, defined as a transition probability from the state space $E$ to $E_2$. More precisely, we sample a uniform random variable $\mathcal{U}_i$ on $[0, 1] $, independent of all other randomness (Brownian motion and jump times), and apply a measurable function ${\psi : [0, 1] \times E \to E_2}$ such that
\begin{equation*}
    \mathbb{P}(\psi(\mathcal{U}_{i+1}, u_i) \in \mathcal{A}) = \mathcal{Q}(u_i, E_1 \times \mathcal{A}), \qquad \text{for all } \mathcal{A} \in \mathcal{B}(E_2).
\end{equation*}
\noindent The discrete component is then updated via
\begin{equation*}
    z_{i+1} := \psi(\mathcal{U}_{i+1}, u_i),
\quad \text{and hence} \quad
U_{T_{i+1}} = (y_{i+1}, z_{i+1}).
\end{equation*}

\begin{assumption}
The transition kernel $ \mathcal{Q} : E \times \mathcal{B}(E_2) \to [0, 1] $ is measurable in its first argument, and for each fixed $ u \in E $, $ \mathcal{Q}(u, \cdot) $ is a probability measure on $ E_2 $.
\end{assumption}

\noindent
We additionally assume that the kernel enforces state changes at each jump, i.e
\begin{equation*}
 \mathcal{Q}\big(u, E-\{u\}\big) = 1 \quad \text{for all } u \in E.
\end{equation*}

\begin{remark}
When $j=0$, the PDifMP $U_t$ evolves without discontinuities and is reset at each jump time by continuation of the flow, i.e., $y_{i+1} = \phi(T_{i+1}, u_i) $. In this case, the dynamics reduce to those of a classical PDifMP \cite{bujorianu2006toward, bect2007processus, meddah2024stochastic}. When $j \neq 0$, jumps induce discontinuities in the path of the process, and the full PDifMP $(U_t)_{t \geq 0} $ is c\'adl\`ag in the classical sense.
\end{remark}
\noindent After the jump, the process restarts from the updated state $ u_{i+1} = (y_{i+1}, z_{i+1}) $, and the procedure repeats recursively.
\noindent We define the embedded Markov chain associated with the PDifMP $(U_t)_{t\geq 0}$ by
\begin{equation*}
 K_i := U_{T_i}, \qquad \omega_i := T_i - T_{i-1}, \quad \text{for } i \in \mathbb{N},   
\end{equation*}
with $T_0:= 0$. The pair $ (K_i, \omega_i) $ records the post-jump locations and the waiting times until the next jump, respectively. The number of jumps up to time $t$ is denoted by
\begin{equation*}
    N_t = \sum_{i \geq 1} \mathbbm{1}_{\{T_i \leq t\}}.
\end{equation*}

\begin{assumption}
\label{jump_count}
For every initial condition $u_i \in E$ and every finite time horizon $t > 0$, the expected number of jumps remains finite, i.e.,
\begin{equation*}
  \mathbb{E}[N_t \mid U_t = u_i] < \infty.  
\end{equation*}
\end{assumption}

\noindent
This non-explosion condition guarantees global existence of sample paths. Throughout this work, we assume that the PDifMP $ (U_t)_{t \geq 0} $ satisfies Assumptions~\ref{existn_&uniq}--\ref{jump_count}, ensuring that the model is well-posed.

\section{Exact Simulation of the FPT of PDifMPs}
\label{section 3}
In this section, we define the FPT of PDifMPs to time-dependent thresholds, extending the framework introduced in \cite{herrmann2023exact} from constant to general thresholds and PDifMP formulation. Consider the generalised PDifMP $ U_t=(Y_t, Z_t)$ defined in Section \ref{general setting}, where $Y_t$ evolves continuously between jump times and $Z_t$ is the discrete component determined by the transitional kernel $\mathcal{Q}$.\\
Throughout this work, we restrict to the one-dimensional setting, i.e., $m = n_1 = n_2 = 1$, and consider first passage time of the continuous component $Y_t$ of the PDifMP. We assume the process starts below the threshold, that is, $Y_0 = \phi(0, U_0) < \tilde{\beta}(0)$
 where $\tilde{\beta}$ is a deterministic, time-varying threshold.
\begin{remark}
Although the PDifMP is a two-component process, the FPT we consider is determined solely by the continuous component. This is because the threshold is defined over the continuous state space, and the constant value of the process between jumps does not influence whether the threshold has been crossed. Furthermore, the flow map $\phi(t, U_t) = Y_t$ fully describes the evolution of the continuous component between jumps. Therefore, computing the first time $Y_t \geq \tilde{\beta}(t)$ is equivalent to finding the first time the full process $(U_t)_{t \geq 0}$ enters the set $\{ (y, z) \in E : y \geq \tilde{\beta}(t) \}$.
\end{remark}
\noindent The FPT $\tau_{\tilde{\beta}}$ of PDifMP $(U_t)_{t \geq 0}$ is then defined as
\begin{equation}
    \tau_{\tilde{\beta}} = \inf \{ t \in \mathbb{R}^{+}_{0} \mid Y_t \geq \tilde{\beta}(t) \},
    \label{eq:FPT}
\end{equation}
denoting the earliest time at which the continuous component $Y_t$  reaches or crosses the threshold. Note that the stopping time $\tau_{\tilde{\beta}}$ is a random variable, as its realisation depends on the various possible paths of the process $Y_t$.

\subsection{Hybrid exact FPT simulation scheme for PDifMPs}
\label{hybrid_exact_algorithm}

\noindent Our goal here is to develop an Exact simulation strategy for the FPT of a PDifMP to a time-dependent threshold. The idea is to combine existing Exact methods for simulating the FPT of diffusions \cite{khurana2024exact,herrmann2019exact} with a jump mechanism that governs the PDifMP dynamics. To do so, we introduce a simulation scheme, inspired by \cite{herrmann2019exact}, that alternates between continuous diffusion phases and discrete jump updates. We refer to this as the \textit{Hybrid exact FPT simulation scheme}.
\noindent The method proceeds recursively across inter-jump intervals. For each interval $[T_i, T_{i+1})$, we simulate the FPT of a diffusion process with fixed parameters, then compare it to the next jump time. If the threshold is reached first, we stop. Otherwise, we simulate the post-jump value and iterate.

\begin{enumerate}[label=Step \arabic*., leftmargin=*]
    \item \textbf{Simulate the continuous FPT:}  
    Let $z_i$ be the value of the discrete component at the current jump time $T_i$. Define the \textit{tracking process} $Y^{i,\infty}$ as the solution to the SDE
    \begin{equation}
    dY^{i,\infty}_t = \mu\big(t,\, (Y^{i,\infty}_t, z_i)\big)\, dt + \sigma\big(t,\,(Y^{i,\infty}_t, z_i)\big)\, dB_t, \qquad t \in [T_i, \infty),
    \label{eq:tracking process}
    \end{equation}
    with $Y^{i,\infty}_{T_i} = y_i$. Simulate the FPT
    \begin{equation*}
        \tau_{\beta}^{\text{cont}, i} := \inf\left\{ t \geq T_i \, \middle| \, Y^{i,\infty}_t = \tilde{\beta}(t) \right\},
    \end{equation*}
    using the Exact method from \cite{khurana2024exact}. Details are given in Subsection~\ref{subsection3.2}.

    \item \textbf{Simulate the next jump time:}  
    Generate the next jump time $T_{i+1} = T_i + \tau_i$ based on the inter-jump time $\tau_i$, using the intensity function $\lambda$ as described in Subsection~\ref{general setting}.

    \item \textbf{Compare stopping criteria:}
    \begin{enumerate}[label=(\alph*)]
        \item If $ \tau_{\tilde{\beta}}^{\text{cont}, i} < T_{i+1}$, the process reaches the threshold before the jump. Set
        \begin{equation*}
            \tau_{\tilde{\beta}} = \tau_{\tilde{\beta}}^{\text{cont}, i},
        \end{equation*}
        and stop.

        \item If instead $ \tau_{\tilde{\beta}}^{\text{cont}, i} \geq T_{i+1} $, continue with the following:
        \begin{enumerate}[label=(\roman*)]
            \item Simulate the value of $Y^{i,\infty}$ at $T_{i+1}$ using the conditional Exact sampling method. This step involves technical considerations due to the time-dependent threshold and is detailed in Subsection~\ref{subsection3.3}.

            \item Apply the jump update to the continuous component:
            \begin{equation}
            Y_{T_{i+1}} = Y^{i,\infty}_{T_{i+1}} + j\big(T_{i+1}, \, (Y^{i,\infty}_{T_{i+1}}, z_i)\big).
            \label{eq:value_after_jump}
            \end{equation}

            \item If $ Y_{T_{i+1}} \geq \tilde{\beta}(T_{i+1}) $, then the threshold is crossed due to the jump. Set
            \begin{equation*}
                \tau_{\tilde{\beta}} = T_{i+1},
            \end{equation*}
           
            and stop.

            \item Otherwise, update the state to $u_{i+1}$ and repeat from Step 1.
        \end{enumerate}
    \end{enumerate}
\end{enumerate}

\noindent
The algorithm continues until the threshold is reached. Formally, this  corresponds to finding the first index $i\in \mathbb{N}_{0}$ such that
\begin{equation}
\tau_{\tilde{\beta}} = \min\left\{ \tau_{\tilde{\beta}}^{\text{cont}, i} \,\middle|\, \tau_{\tilde{\beta}}^{\text{cont}, i} < T_{i+1} \right\}.
\end{equation}

\begin{remark}
In this context, the tracking process $Y^{i,\infty}$ is introduced to decouple the continuous diffusion dynamics from the discrete jump mechanism. This auxiliary process evolves as a standard SDE with frozen discrete state $z_i$, and serves as a tool to compute the FPT. Unlike the actual processes $Y_t$ and the discretised version $Y^{i}_t$, which may be interrupted by jumps, $Y^{i,\infty}$ provides an uninterrupted diffusion trajectory that allows us to determine whether the threshold is reached before the next jump. More importantly, it also satisfies the regularity requirements needed to apply the Girsanov transformation, which is the core technique behind the Exact simulation algorithm.
\end{remark}

\noindent In Figure~\ref{fig:FPT_scenarios} we illustrate the simulation procedure introduced above. The left panel (\textbf{Scenario 1}) corresponds to the case where the continuous dynamics alone drive the process across the threshold before any jump occurs. In this case, the FPT coincides with the first-passage time $\tau_{\tilde{\beta}}^{\text{cont}, i}$ of the diffusion process $Y^{i,\infty}$ and can be sampled directly using the Exact method from \cite{khurana2024exact}.

\begin{figure}[H]
    \centering
    \begin{subfigure}[b]{0.495\textwidth}
        \centering
        \includegraphics[width=\textwidth]{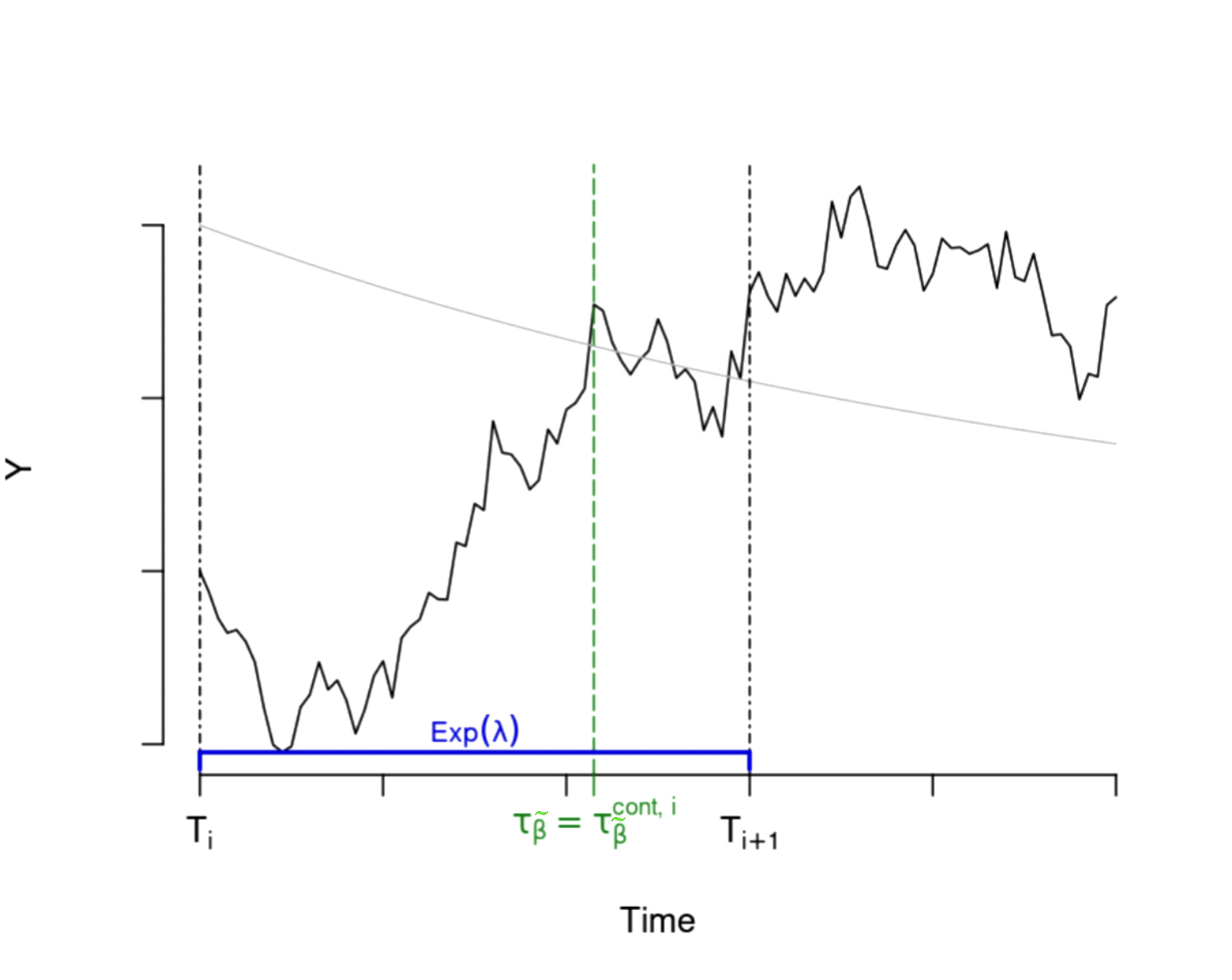}
        \caption{\textbf{Scenario 1}. The process crosses the threshold before the next jump.}
    \end{subfigure}
    \hfill
    \begin{subfigure}[b]{0.495\textwidth}
        \includegraphics[width=0.82\textwidth]{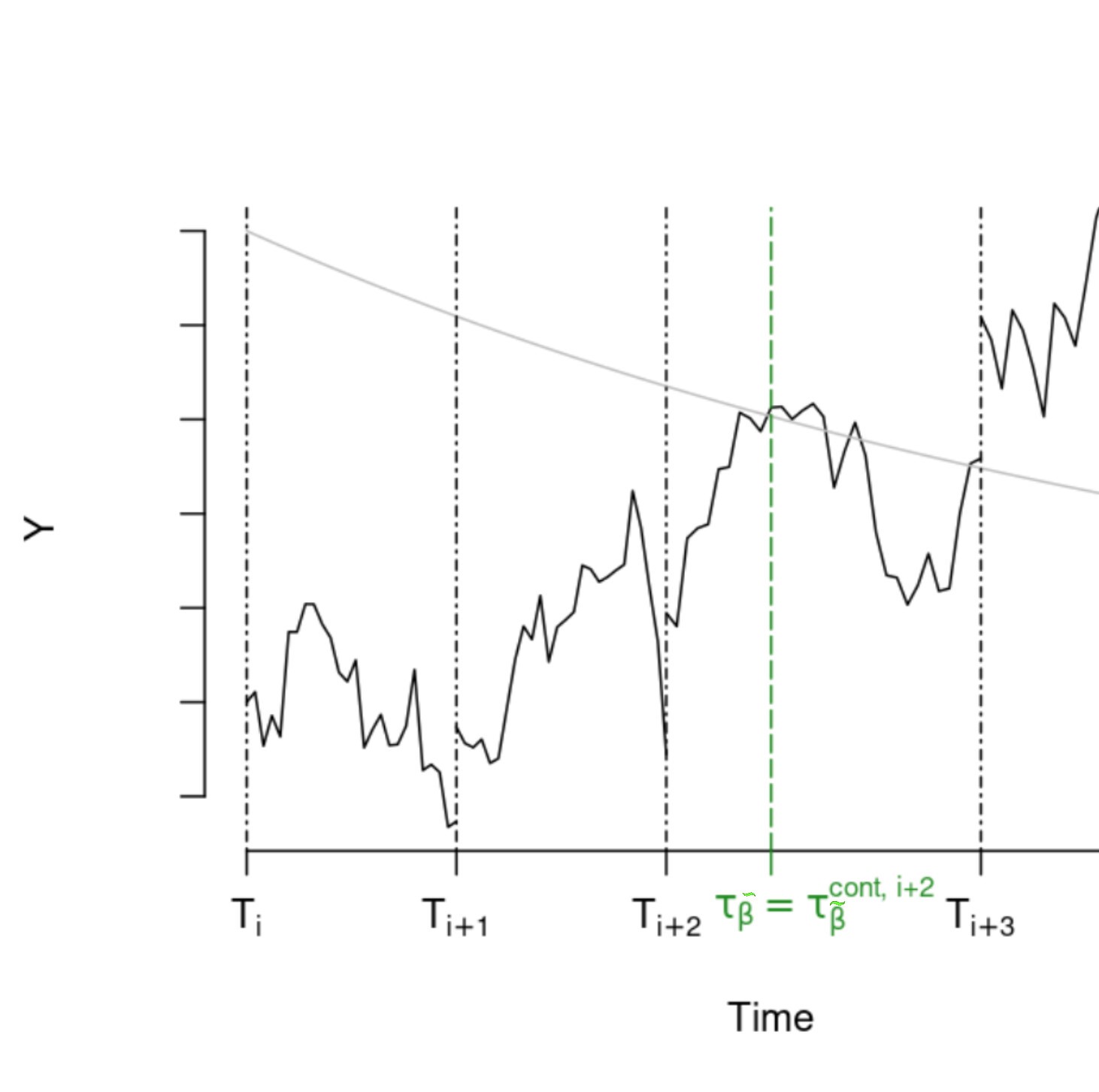}
        \caption{\textbf{Scenario 2}. The process jumps before reaching the threshold.}
    \end{subfigure}
     \caption{Illustration of the two possible first-passage time scenarios.}
    \label{fig:FPT_scenarios}
\end{figure}

\noindent In contrast, the right panel (\textbf{Scenario 2}) depicts a situation in which a jump occurs before the threshold is reached. The process value at the jump time $T_{i+1}$ is sampled (see Subsection~\ref{subsection3.3} for the detailed approach) and updated via the jump rule \eqref{eq:value_after_jump}. The resulting state becomes the new starting point for the next inter-jump interval. This recursive procedure is repeated until the threshold is crossed. In the example shown, the threshold is crossed during the interval $[T_{i+2}, T_{i+3})$, hence $\tau_{\tilde{\beta}} = \tau_{\tilde{\beta}}^{\text{cont}, i+2}$.\\
\noindent This recursive structure highlights the two core components of the simulation algorithm:
\begin{enumerate}[label=(\roman*)]  
    \item Exact simulation of the first-passage time $\tau_{\tilde{\beta}}^{\text{cont},i}$ of the tracking process \eqref{eq:tracking process}, extended in the upcoming section to time- and state-dependent coefficients.
    \item Exact sampling of $Y^{i,\infty}_{T_{i+1}}$ conditioned on $\tau_{\tilde{\beta}}^{\text{cont},i} \geq T_{i+1}$, developed in Subsection~\ref{subsection3.3}.
\end{enumerate}
\noindent
By repeating this procedure and recording the outcomes of $\tau_{\tilde{\beta}}$, one obtains an empirical approximation of the FPT distribution for the full PDifMP $(U_t)_{t\geq 0}$.
\subsection{FPT Simulation of the tracking process}
\label{subsection3.2}

We now detail the first key step of the algorithm introduced in Section~\ref{hybrid_exact_algorithm} namely, the simulation of the FPT $\tau_{\tilde{\beta}}^{\text{cont},i}$ of the tracking process $Y^{i,\infty}$ defined in Equation \eqref{eq:tracking process}.\\
\noindent Our approach builds on the Exact method developed in \cite{khurana2024exact} for SDEs, which we extend here to cover general time- and state-dependent drift and diffusion coefficients. The method is based on rejection sampling using Brownian motion as an auxiliary process, where acceptance is governed by a version of Girsanov’s theorem adapted to stopping times.\\
\noindent The main technical contribution of this section is to show that this Girsanov-based rejection scheme remains valid under our generalised setting. Moreover, we provide here an explicit expression for the rejection probability. As in earlier works, see for instance \cite{jeanblanc2009mathematical}, the application of Girsanov’s transformation requires the diffusion coefficient to be unitary. We therefore first transform the SDE via the Lamperti transform. To apply the Lamperti transform, we require the following regularity condition on the diffusion coefficient. 
\begin{assumption}
        The diffusion coefficient $\sigma(t, (y, z))$ is non-negative for all $t \in [T_i, \infty)$, $y \in E_1$, and fixed $z \in E_2$.
  \label{assuption_for_Lamperti}
\end{assumption}
\noindent Combined with Assumption~\ref{existn_&uniq}, this ensures that the transformation is well-defined and invertible.

\begin{proposition}[Lamperti transformation]
\label{Lampt}
Suppose Assumption~\ref{assuption_for_Lamperti} holds. Fix a discrete state $z_i \in E_2$ and define the time-dependent bijective map
\begin{equation*}
  F^{i}\big(t,\, (Y^{i,\infty}_{t}, z_i)\big) := \int^{y} \frac{1}{\sigma\big(t, \, (h, z_i)\big)} dh \bigg|_{y=Y^{i,\infty}_t},
  \end{equation*}
which transforms $Y^{i,\infty}_t$ to the process $X^{i}_t := F\big(t,\, (Y^{i,\infty}_t, z_i)\big)$ satisfying
\begin{equation}
dX^{i,\infty}_t = \alpha\big(t,\, (X^{i,\infty}_t,z_i)\big)dt + dB_t, \quad   X^{i,\infty}_{T_{i}} = F^i\big(T_i,\,( Y^{i,\infty}_{T_{i}},z_i)\big).
\label{eq:cont_SDE_after_Lamperti}
\end{equation}
Here, the transformed drift $\alpha$ is given by
\begin{equation*}
    \alpha\big(t,\, (x,z_i)\big) = \left.\left(\frac{\partial F^i}{\partial t}\big(t,\, (y,z_i)\big)+ \frac{\mu\big(t,\, (y,z_i)\big)}{\sigma\big(t,\, (y,z_i)\big)} - \frac{1}{2}\frac{\partial \sigma}{\partial y}\big(t,\, (y,z_i)\big) \right)\right|_{y={F^i}^{-1}\big(t,\, (x,z_i)\big)}.
\end{equation*}
\end{proposition}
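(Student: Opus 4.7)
The plan is to apply Itô's formula directly to $X^{i,\infty}_t := F^i(t, (Y^{i,\infty}_t, z_i))$, where the integral defining $F^i$ is precisely the antiderivative in $y$ of $1/\sigma(t,(\cdot,z_i))$. This integrating factor is chosen so that the diffusion coefficient of the transformed process collapses to unity, which is the whole purpose of the Lamperti map in preparation for the Girsanov step mentioned in the text.

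First I would verify that $F^i(t,\cdot,z_i)$ is a well-defined $C^1$-bijection onto its range. Under Assumption~\ref{assuption_for_Lamperti} (supplemented by strict positivity of $\sigma$ on the relevant domain, which is needed to make $1/\sigma$ meaningful), the integrand is finite and $F^i$ is strictly increasing in $y$, hence invertible. I would then record the partial derivatives obtained from the fundamental theorem of calculus:
\begin{equation*}
\partial_y F^i\big(t,(y,z_i)\big) = \frac{1}{\sigma(t,(y,z_i))}, \qquad \partial^2_{yy} F^i\big(t,(y,z_i)\big) = -\frac{1}{\sigma^2(t,(y,z_i))}\,\frac{\partial \sigma}{\partial y}(t,(y,z_i)),
\end{equation*}
while $\partial_t F^i$ is left symbolic since it reappears unchanged in $\alpha$.

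Next, applying Itô's formula to the composition using the SDE~\eqref{eq: main SDE} for $Y^{i,\infty}$ with $z_i$ frozen yields
\begin{equation*}
dX^{i,\infty}_t = \left(\partial_t F^i + \partial_y F^i \cdot \mu + \tfrac{1}{2}\partial^2_{yy}F^i \cdot \sigma^2\right)dt + \partial_y F^i \cdot \sigma\, dB_t.
\end{equation*}
Substituting the derivatives above, the Brownian term reduces to $(\sigma^{-1})\sigma\, dB_t = dB_t$, and the drift collapses to $\partial_t F^i + \mu/\sigma - \tfrac{1}{2}\,\partial_y \sigma$. Re-expressing the result in the $x$-variable via $y = (F^i)^{-1}(t,(x,z_i))$ produces exactly the stated formula for $\alpha(t,(x,z_i))$. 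The initial condition is immediate from evaluating $F^i$ at $t=T_i$.

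The main obstacle is not the computation but the regularity bookkeeping: Itô's formula requires $F^i \in C^{1,2}$, which implicitly demands that $\sigma$ be continuously differentiable in $y$ (so that $\partial_y \sigma$ in the drift is well-defined) and strictly positive (so that $1/\sigma$ is locally bounded). Assumption~\ref{existn_&uniq} provides global Lipschitz continuity and linear growth, but not differentiability or strict positivity; these would either have to be added as explicit hypotheses, or the argument carried out on a stopped/localised version of $Y^{i,\infty}$ before passing to the limit. Once these regularity conditions are in place, the algebra of the Itô expansion completes the proof mechanically.
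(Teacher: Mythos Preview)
Your argument is correct and is exactly the standard derivation of the Lamperti transform via It\^o's formula. The paper does not supply its own proof of Proposition~\ref{Lampt}; it is stated as a known preparatory result, so there is nothing to compare against beyond noting that your computation and your remarks on the regularity requirements (strict positivity and $C^{1}$-smoothness of $\sigma$ in $y$) are precisely what one needs to make the It\^o expansion legitimate.
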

\noindent The transformed process $X^{i,\infty}_t$ inherits the same probability space $(\Omega, \mathcal{F}, (\mathcal{F}_t)_{t \geq 0}, \mathbb{P})$ as the original process $Y^{i,\infty}_t$.
\begin{remark}
Since the Lamperti transform is bijective in the continuous variable $y$, computing the FPT of $Y^{i,\infty}$ to the threshold $\tilde{\beta}(t)$ is equivalent to computing the FPT of $X^{i,\infty}$ to the transformed threshold
    \begin{equation*}
        \beta(t) := F^i\big(t,\, (\tilde{\beta}(t), z_i)\big), \quad \text{for fixed } z_i\in E_2.
    \end{equation*}
\end{remark}
\noindent We define the notation $\tau_{\beta}^{\text{cont},i}$ as the FPT of the process $X^{i,\infty}$ to $\beta$.
 \noindent To proceed, we now present a stopping-time version of Girsanov’s theorem that is well-defined for SDEs with unit diffusion and time- and space-dependent drift, as in Equation \eqref{eq:cont_SDE_after_Lamperti}.
\begin{theorem}[Girsanov's transformation for stopping times]
\label{thm:girsanov_stopping}
Let $(\Omega, \mathcal{F}, (\mathcal{F}_t)_{t \geq 0}, \mathbb{P})$ be a filtered probability space carrying a standard Brownian motion $(B_t)_{t \geq 0}$. Let $(X^{i,\infty}_t)_{t \geq 0}$ be the unique strong solution of the SDE \eqref{eq:cont_SDE_after_Lamperti}, where the drift function $\alpha: \mathbb{R}_{+} \times \mathbb{R} \to \mathbb{R}$ is $(\mathcal{F}_t)$-adapted.\\
\noindent Let $\rho$ be the explosion time of $X^{i,\infty}_t$, and let $\mathcal{T}$ be any stopping time such that $\mathcal{T} < \rho$ almost surely. If $X^{i,\infty}_t$ remains bounded, then $\rho=\infty$. Assume the Novikov condition holds up to time $\mathcal{T}$, i.e.
\begin{equation}
\mathbb{E}_{\mathbb{P}}\left[\exp\left( \frac{1}{2} \int_0^{\mathcal{T}} \alpha^2\big(t,\,( X^{i,\infty}_t, z_i)\big)\, dt \right) \right] < \infty.
\label{novik}
\end{equation}
Define the process
\begin{equation}
    M_t = \exp \left( -\int_0^t \alpha\big(h, (X^{i,\infty}_h,z_i)\big) dB_h - \frac{1}{2} \int_0^t \alpha^2\big(h,\,( X^{i,\infty}_h,z_i)\big) dh \right).
    \label{Exp-process}
\end{equation}
Then, the probability measure $\mathbb{Q}$ given by
\begin{equation}
\left.\frac{d\mathbb{Q}}{d\mathbb{P}}\right|_{\mathcal{F}_{\mathcal{T}}} = M_{\mathcal{T}}.
\label{measure_transf}
\end{equation}
is well-defined and equivalent to $\mathbb{P}$ on $\mathcal{F}_{\mathcal{T}}$. Under $\mathbb{Q}$, the process
\begin{equation}
\tilde{B}_t:= B_t + \int_0^t \alpha\big(h, \,(X^{i,\infty}_h,z_i)\big)\, dh, \quad 0 \leq t \leq \mathcal{T},
\label{trans_proc}
\end{equation}
is a standard Brownian motion. In particular, the process satisfies
\begin{equation*}
 dX^{i,\infty}_t = d\tilde{B}_t.   
\end{equation*}
Moreover, for any bounded, measurable functional $\Psi: C([0, \mathcal{T}]; \mathbb{R}) \to \mathbb{R}$, it holds that
\begin{equation}
\mathbb{E}_{\mathbb{P}}\left[\Psi\left( X^{i,\infty}_t \, ; \, 0 \leq t \leq \mathcal{T} \right)\right] 
= \mathbb{E}_{\mathbb{Q}}\left[\Psi\left( X^{i,\infty}_t \, ; \, 0 \leq t \leq \mathcal{T} \right) M_{\mathcal{T}} \right].
\label{meas-transf}
\end{equation}
\end{theorem}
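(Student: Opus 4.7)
The plan is to proceed in four stages: verify that $(M_t)_{t \in [0, \mathcal{T}]}$ is a genuine $\mathbb{P}$-martingale (not merely a local one), deduce well-definedness and equivalence of $\mathbb{Q}$, apply a Lévy-type characterisation to obtain the Brownian property of $\tilde{B}$ under $\mathbb{Q}$, and finally derive the reweighting identity \eqref{meas-transf} directly from the Radon--Nikodym definition.

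First I would apply It\^o's formula to $\log M_t$ using the SDE \eqref{eq:cont_SDE_after_Lamperti} and the definition \eqref{Exp-process}; this shows that $M_t$ is a non-negative continuous local martingale under $\mathbb{P}$ with $M_0 = 1$, since the drift terms arising from the quadratic variation of the stochastic integral cancel exactly with the $-\tfrac12 \alpha^2$ term. Next, since $\mathcal{T} < \rho$ a.s., the integrals in \eqref{Exp-process} are well-defined up to $\mathcal{T}$; Novikov's condition \eqref{novik} then upgrades the stopped process $(M_{t \wedge \mathcal{T}})_{t \geq 0}$ from a local martingale to a uniformly integrable $\mathbb{P}$-martingale by the classical result (see, e.g., Revuz--Yor or Karatzas--Shreve). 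In particular $\mathbb{E}_{\mathbb{P}}[M_{\mathcal{T}}] = 1$ and $M_\mathcal{T} > 0$ $\mathbb{P}$-a.s., so \eqref{measure_transf} defines a probability measure $\mathbb{Q}$ equivalent to $\mathbb{P}$ on $\mathcal{F}_{\mathcal{T}}$.

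For the Brownian property of $\tilde{B}$, my approach is to apply the standard Girsanov theorem to the stopped filtration $(\mathcal{F}_{t \wedge \mathcal{T}})_{t \geq 0}$: under $\mathbb{Q}$, the process $\tilde{B}_{t \wedge \mathcal{T}} = B_{t \wedge \mathcal{T}} + \int_0^{t \wedge \mathcal{T}} \alpha(h, (X^{i,\infty}_h, z_i))\, dh$ is a continuous local martingale with quadratic variation $t \wedge \mathcal{T}$, hence a Brownian motion on $[0, \mathcal{T}]$ by L\'evy's characterisation. Substituting $d\tilde{B}_t = dB_t + \alpha\bigl(t, (X^{i,\infty}_t, z_i)\bigr)\, dt$ into \eqref{eq:cont_SDE_after_Lamperti} immediately yields $dX^{i,\infty}_t = d\tilde{B}_t$ on $[0, \mathcal{T}]$. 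Finally, the reweighting identity \eqref{meas-transf} is a direct consequence of the abstract change-of-variables formula for Radon--Nikodym derivatives: for any bounded $\mathcal{F}_{\mathcal{T}}$-measurable functional $\Psi$,
\begin{equation*}
\mathbb{E}_{\mathbb{Q}}[\Psi \cdot M_{\mathcal{T}}^{-1}]
= \mathbb{E}_{\mathbb{P}}\!\left[\Psi \cdot M_{\mathcal{T}}^{-1} \cdot \frac{d\mathbb{Q}}{d\mathbb{P}}\right]
= \mathbb{E}_{\mathbb{P}}[\Psi],
\end{equation*}
which after rearranging gives \eqref{meas-transf}; here boundedness of $\Psi$ combined with $M_\mathcal{T} \in L^1(\mathbb{P})$ ensures the integrals converge.

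The main obstacle is the upgrade from local to true martingale on a stochastic interval $[0, \mathcal{T}]$: one must be careful that the Novikov bound \eqref{novik} is stated with respect to $\mathcal{T}$ rather than a deterministic horizon, so the uniform integrability argument must be carried out on the time-changed filtration and invoke optional stopping rather than bounded convergence. A secondary technical point is handling the explosion time $\rho$: the assumption $\mathcal{T} < \rho$ a.s.\ (together with the boundedness remark which forces $\rho = \infty$ in our setting) guarantees that the integrals $\int_0^{\mathcal{T}} \alpha\, dB_h$ and $\int_0^{\mathcal{T}} \alpha^2\, dh$ are finite $\mathbb{P}$-a.s., which is needed to justify the applicability of It\^o's formula and of Novikov's criterion before the change of measure.
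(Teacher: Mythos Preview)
Your proposal is correct and follows essentially the same three-step structure as the paper's proof in Appendix~\ref{Appendix_A}: It\^o's formula plus Novikov to upgrade $(M_{t\wedge\mathcal T})$ from a local to a true uniformly integrable martingale, the classical Girsanov/L\'evy argument to identify $\tilde B$ as a $\mathbb Q$-Brownian motion on $[0,\mathcal T]$, and the Radon--Nikodym definition for the reweighting identity. Your treatment is in fact slightly more careful than the paper's (you flag the stopped-filtration subtlety and the role of $\mathcal T<\rho$ explicitly), but the route is the same.
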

\noindent This theorem extends classical results (e.g., \cite{oksendal2013stochastic, jeanblanc2009mathematical}) by allowing time- and space-dependent drift and stopping times. Under the transformed measure $\mathbb{Q}$, the process $X^{i,\infty}$ behaves as a standard Brownian motion, thereby establishing a probabilistic equivalence between $X^{i,\infty}$ and Brownian motion. This result is key to the Exact simulation framework, as it enables computing the acceptance probability for a Brownian path to be a valid sample of $X^{i,\infty}$ using the weight $M_{\mathcal{T}}$. The proof of Theorem~\ref{thm:girsanov_stopping} is outlined in Appendix~\ref{Appendix_A}.
\begin{remark}
    To ensure that the explosion time $\rho$ occurs after the stopping time $\mathcal{T}$, one must verify this condition using Feller's Test for Explosions; see \cite{karatzas1991brownian}.
    \label{rmk : Feller test}
\end{remark}
To apply Girsanov’s theorem in our setting, we require certain regularity conditions. Let ${C^{1,1}([T_i, \infty) \times E)}$ denote the set of functions $f : [T_i, \infty) \times E \to \mathbb{R}$ such that, for each fixed $z \in E$, the partial derivatives $\partial_t f(t, (x, z))$ and $\partial_x f(t, (x, z))$ exist and are continuous on $[T_i, \infty) \times E$.
\begin{assumption}
\label{ass: on drift}    
Let $\alpha : [T_i,\infty) \times E \to \mathbb{R}$ be the drift function. We assume:
\begin{enumerate}[label=(\roman*)]
    \item For each fixed $z_i \in E_2$, the function $\alpha(t, (x, z)) \in C^{1,1}([T_i, \infty) \times E)$;
    \item The following integrability condition holds
    \begin{equation*}
        \mathbb{E}_{\mathbb{Q}}\left[\int_{T_i}^{\mathcal{T}} \alpha^2\big(t, (X^{i,\infty}_t, z_i)\big)\, dt\right] < \infty;
    \end{equation*}
        \item The threshold function $\beta : [T_i, \infty) \to \mathbb{R}$ is differentiable.
\end{enumerate}
\end{assumption}
\noindent We also introduce the following functions
\begin{align*}
A\big(t, (x, z_i)\big) &:= \int^x \alpha\big(t, \, (h, z_i)\big)\, dh, \\[0.22cm]
\gamma_1(t, z_i) &:= -\frac{\partial}{\partial t} A\big(t,\, (\beta(t), z_i)\big) - \alpha\big(t,\, (\beta(t), z_i)\big)\, \beta'(t), \\[0.22cm]
\gamma_2\big(t,\,( x, z_i)\big) &:= \frac{\partial}{\partial t} A\big(t,\, (x, z_i)\big) + \frac{1}{2} \left( \frac{\partial \alpha}{\partial x}\big(t,\,( x, z_i)\big) + \alpha^2\big(t,\,( x, z_i)\big) \right).
\end{align*}
\noindent Here, $A$ denotes the antiderivative of the drift $\alpha$ with respect to $x$ for fixed $z_i\in E_2$.\\
For notational convenience, we denote by $\tilde{X}^i_t := \left(X^{i,\infty}_t\right)_{\mathbb{Q}}$ the process obtained by applying the Girsanov transformation from Theorem~\ref{thm:girsanov_stopping} to $X^{i,\infty}_t$. That is, under the measure $\mathbb{Q}$, the process $\tilde{X}^i$ evolves as a standard Brownian motion starting from $X^{i,\infty}_{T_i}$.
\begin{theorem}
\label{thm:Girsanovprob_FPT}
Suppose that the Novikov condition \eqref{novik} and Assumption~\ref{ass: on drift} are satisfied. Then, for any bounded measurable function $\Psi: \mathbb{R} \to \mathbb{R}$, the following identity holds
\begin{equation}
\mathbb{E}_{\mathbb{P}}\left[\Psi(\tau_{\beta}^{\text{cont},i}) \, \mathbf{1}_{\{\mathcal{T}_\beta < \infty\}}\right] 
= \exp\left(-A\big(T_i,\,( \tilde{X}^i_{T_i}, z_i)\big) + A\big(T_i,\, (\beta(T_i), z_i)\big)\right)
\cdot \mathbb{E}_{\mathbb{Q}}\left[\Psi(\tau_{\beta}^{\text{cont},i}) \cdot \eta^{[1]}(\mathcal{T}_\beta)\right],
\label{eq:Girsanov2}
\end{equation}
where the probability weight $\eta^{[1]} : \mathbb{R}_+ \to \mathbb{R}$ is defined as
\begin{equation}
\eta^{[1]}(t) := \mathbb{E}_{\mathbb{Q}}\left[\exp\left(-\int_{T_i}^{t} \left( \gamma_1(h, z_i) + \gamma_2\big(h, (\tilde{X}^i_h, z_i)\big) \right) dh \right) \,\Big|\, \ t=\tau_\beta^{\text{cont},i} \right].
\label{eq:eta_def}
\end{equation}
\end{theorem}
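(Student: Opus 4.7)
The plan is to reduce the identity to an application of Girsanov's transformation (Theorem~\ref{thm:girsanov_stopping}), followed by Itô's formula applied to the antiderivative $A$, and finally a reorganisation in which the time-dependence of the threshold produces the correction term $\gamma_1$. First I would invoke Theorem~\ref{thm:girsanov_stopping} with stopping time $\mathcal{T}=\tau_\beta^{\mathrm{cont},i}$ on the event $\{\mathcal{T}_\beta<\infty\}$ (Assumption~\ref{existn_&uniq} combined with Remark~\ref{rmk : Feller test} excludes explosion before $\mathcal{T}$, and the Novikov condition~\eqref{novik} guarantees that the exponential process in~\eqref{Exp-process} is a true martingale). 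Rewriting the Radon--Nikodym density in terms of $\tilde{X}^i$, which is a $\mathbb{Q}$-Brownian motion satisfying $dX^{i,\infty}_t = d\tilde{X}^i_t$, the exponent inside the resulting $\mathbb{Q}$-expectation becomes
\[
\int_{T_i}^{\mathcal{T}} \alpha(h,(\tilde{X}^i_h,z_i))\, d\tilde{X}^i_h \;-\; \tfrac{1}{2}\int_{T_i}^{\mathcal{T}} \alpha^2(h,(\tilde{X}^i_h,z_i))\, dh.
\]

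Next I would eliminate the stochastic integral. By Assumption~\ref{ass: on drift}(i), $A(t,(x,z_i))$ is $C^{1,2}$ with $\partial_x A = \alpha$, so applying Itô's formula to $A(t,(\tilde{X}^i_t,z_i))$ yields
\[
\int_{T_i}^{\mathcal{T}}\! \alpha\, d\tilde{X}^i - \tfrac{1}{2}\!\int_{T_i}^{\mathcal{T}}\! \alpha^2\, dh \;=\; A(\mathcal{T},(\tilde{X}^i_{\mathcal{T}},z_i)) - A(T_i,(\tilde{X}^i_{T_i},z_i)) - \int_{T_i}^{\mathcal{T}}\! \gamma_2(h,(\tilde{X}^i_h,z_i))\, dh,
\]
the remaining deterministic integrand being exactly $\gamma_2$ by construction. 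At the hitting time, $\tilde{X}^i_{\mathcal{T}} = \beta(\mathcal{T})$ by definition of $\tau_\beta^{\mathrm{cont},i}$; I would then use the differentiability of $\beta$ (Assumption~\ref{ass: on drift}(iii)) and the chain rule to obtain
\[
A(\mathcal{T},(\beta(\mathcal{T}),z_i)) - A(T_i,(\beta(T_i),z_i)) = \int_{T_i}^{\mathcal{T}}\! \bigl(\partial_t A(h,(\beta(h),z_i)) + \alpha(h,(\beta(h),z_i))\beta'(h)\bigr)\, dh = -\int_{T_i}^{\mathcal{T}}\! \gamma_1(h,z_i)\, dh.
\]
This manoeuvre, namely absorbing the boundary value at the random endpoint into the initial value plus a pathwise $\gamma_1$-correction, is the main technical step and is precisely where the time-dependence of the threshold materialises; it is the novelty beyond the constant-threshold setting of \cite{herrmann2019exact,herrmann2023exact} and what I expect to be the main obstacle to justify carefully at the random endpoint $\mathcal{T}$ (the integrability in Assumption~\ref{ass: on drift}(ii) is what permits the exchange).

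Substituting these two identities isolates the deterministic prefactor $\exp\!\bigl(-A(T_i,(\tilde{X}^i_{T_i},z_i)) + A(T_i,(\beta(T_i),z_i))\bigr)$ and leaves $\exp\!\bigl(-\int_{T_i}^{\mathcal{T}}(\gamma_1+\gamma_2)\, dh\bigr)$ inside the $\mathbb{Q}$-expectation. Since $\Psi(\tau_\beta^{\mathrm{cont},i})$ is $\sigma(\tau_\beta^{\mathrm{cont},i})$-measurable, the tower property factors the expectation and identifies the residual conditional expectation with $\eta^{[1]}(\mathcal{T}_\beta)$ as defined in~\eqref{eq:eta_def}, delivering the claimed identity~\eqref{eq:Girsanov2}.
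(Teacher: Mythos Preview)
Your proposal is correct and follows essentially the same route as the paper's own proof: invoke the stopping-time Girsanov transformation, apply It\^o's formula to $A(t,(\tilde X^i_t,z_i))$ to remove the stochastic integral and produce the $\gamma_2$ term, then use $\tilde X^i_{\mathcal T}=\beta(\mathcal T)$ together with the chain rule on $t\mapsto A(t,(\beta(t),z_i))$ to extract the deterministic prefactor and the $\gamma_1$ correction. The only cosmetic difference is that you make the final tower-property step explicit, whereas the paper absorbs it directly into the definition of $\eta^{[1]}$.
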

\noindent The proof of this theorem can be found in Appendix~\ref{Appendix_B}.

\begin{remark}
\begin{enumerate}[label=(\roman*)]
    \item Equation~\eqref{eq:Girsanov2} decomposes the transformed expectation into three parts: $\gamma_1$ captures the effect of the moving boundary $\beta(t)$; $\gamma_2$ reflects the influence of the nonlinear drift $\alpha\big(t, (x, z_i)\big)$; and the exponential prefactor corrects for the mismatch between the initial value $\tilde{X}^{i}_{T_i}$ and the threshold $\beta(T_i)$. This decomposition is exact under the stated assumptions, with all terms being $\mathcal{F}_{\mathcal{T}_\beta}$-measurable.
    \item In Theorem \ref{thm:Girsanovprob_FPT}, Girsanov’s Theorem is used to transform the dynamics from $\mathbb{P}$ to $\mathbb{Q}$. In the context of Exact method we apply it in reverse, we start with simulating the FPT of Brownian motion and evaluate the probability $\eta^{[1]}(t)$ to perform acceptance rejection. 
\end{enumerate}
\end{remark}
\noindent To interpret $\eta^{[1]}(t)$ as a well-defined probability, we require the following assumptions.
\begin{assumption}
\label{ass:reweighting}
The terms $\gamma_1$ and $\gamma_2$ satisfy:
\begin{enumerate}[label=(\roman*)]
    \item $\gamma_1$ and $\gamma_2$ are non-negative functions;
    \item There exists a constant $\kappa > 0$ such that
    \begin{equation*}
        \sup_{t,x} \left\{ \gamma_1(t, z_i) + \gamma_2\big(t,\,( x, z_i)\big) \right\} \leq \kappa.
    \end{equation*}
\end{enumerate}
\end{assumption}
\noindent The direct evaluation of $\eta^{[1]}(t)$ is not straightforward, as it depends on the entire continuous path of the transformed process $\tilde{X}^i_t$. To address this, we apply a Poisson thinning technique that only requires evaluation at finitely many time points.
\begin{theorem}[Poisson thinning representation]
\label{thm:poisson_thinning}
Let $\tau^{*}$ denote the FPT of a standard Brownian motion starting at time $T_i$ with initial value $X^{i,\infty}_{T_i}$. Let $\Phi$ be a homogeneous Poisson point process of unit intensity on the rectangle
\begin{equation*}
   \mathbf{D} := [T_i, \tau^{*}) \times [0, \kappa] \subset \mathbb{R}^2, 
\end{equation*}
where $\kappa > 0$ is the uniform upper bound from Assumption~\ref{ass:reweighting}. Given a trajectory $(\tilde{X}^i_t)_{t \in [T_i, \tau^{*})}$, define the subgraph
\begin{equation*}
 \mathcal{G} := \left\{ (t, h) \in \mathbf{D} \,\middle|\, 0 \leq h \leq \gamma_1(t, z_i) + \gamma_2\big(t,\, (\tilde{X}^i_t, z_i)\big) \right\}.   
\end{equation*}
Let $N := \Phi(\mathcal{G})$ denote the number of Poisson points falling under the graph of $\gamma_1 + \gamma_2$. Then, conditional on the path $(\tilde{X}^i_t)_{t \in [T_i, \tau^{*})}$, we have
\begin{equation*}
 \mathbb{P}\left(N = 0 \,\middle|\, \{\tilde{X}^i_t\}_{t \in [T_i, \tau^{*})} \right)
= \exp\left( -\int_{T_i}^{\tau^{*}} \left[ \gamma_1(t, z_i) + \gamma_2\big(t,\, (\tilde{X}^i_t, z_i)\big) \right] dt \right).   
\end{equation*}
\end{theorem}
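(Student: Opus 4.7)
The plan is to reduce the conditional statement to the classical void probability of a unit-intensity Poisson point process, once the trajectory has been fixed.

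First, I would condition on the sample path $\{\tilde{X}^i_t\}_{t \in [T_i, \tau^{*})}$. Since $\tau^{*}$ is a measurable functional of the Brownian trajectory (the first time the path hits $\beta$), fixing the path also fixes $\tau^{*}$. Consequently, both the rectangle $\mathbf{D}$ and the subgraph $\mathcal{G}$ become deterministic Borel subsets of $\mathbb{R}^2$. The Poisson point process $\Phi$ is, by construction, independent of $\tilde{X}^i$ (it serves as an exogenous thinning device on an enlarged probability space), so the conditional law of $\Phi$ remains that of a homogeneous unit-intensity Poisson point process on $\mathbf{D}$.

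Second, I would compute the Lebesgue measure of $\mathcal{G}$. By Assumption~\ref{ass:reweighting}(i), the integrand $\gamma_1(t,z_i) + \gamma_2\big(t,(\tilde{X}^i_t,z_i)\big)$ is non-negative, and by Assumption~\ref{ass:reweighting}(ii) it is bounded above by $\kappa$. Hence $\mathcal{G} \subset \mathbf{D}$ and is a well-defined Borel set (measurability following from continuity of $\gamma_1$, $\gamma_2$ via Assumption~\ref{ass: on drift} and continuity of the sample path). A direct application of Fubini's theorem yields
\begin{equation*}
|\mathcal{G}| = \int_{T_i}^{\tau^{*}} \int_0^{\kappa} \mathbf{1}_{\{0 \leq h \leq \gamma_1(t,z_i) + \gamma_2(t,(\tilde{X}^i_t,z_i))\}}\, dh\, dt = \int_{T_i}^{\tau^{*}} \left[\gamma_1(t,z_i) + \gamma_2\big(t,(\tilde{X}^i_t,z_i)\big)\right] dt.
\end{equation*}

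Third, I would invoke the standard void probability of a homogeneous unit-intensity Poisson point process: for any deterministic Borel set $A$ of finite Lebesgue measure, $\mathbb{P}(\Phi(A) = 0) = \exp(-|A|)$. Applied to $A = \mathcal{G}$ under the conditioning on the path, this gives exactly the claimed identity.

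The core content of the theorem is the classical void probability formula; there is no real analytic obstacle. The only genuine care needed is the bookkeeping around measurability and independence: one must ensure that $\mathcal{G}$ is Borel, that $\Phi$ is built to be independent of the trajectory (so that conditioning does not alter its distribution), and that $\tau^{*}$ is indeed a path-measurable quantity so the random rectangle becomes deterministic after conditioning. Once these technicalities are settled, the proof is essentially a one-line application of the exponential formula.
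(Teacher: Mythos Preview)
Your argument is correct and is exactly the standard route: condition on the path so that $\mathcal{G}$ becomes a deterministic Borel set, compute its Lebesgue measure by Fubini, and apply the void probability formula $\mathbb{P}(\Phi(A)=0)=\exp(-|A|)$ for a unit-rate Poisson process. The paper does not actually supply a proof of this theorem; it states the result and defers to the references \cite{herrmann2019exact, beskos2006retrospective, beskos2005exact} for justification, so your write-up is in fact more detailed than what appears in the text.
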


\noindent To simulate a reweighted FPT sample, we proceed in three steps. First, a candidate time $\tau^{*}$ is drawn, either from its exact distribution (when available) or using an algorithmic scheme such as in \cite{herrmann2016first}. Next, a Brownian path $(\tilde{X}^i_t)_{t \in [T_i, \tau^{*}]}$ is constructed to satisfy $\tilde{X}^i_{\tau^{*}} = \beta(\tau^{*})$ while remaining below the boundary beforehand. This is achieved by simulating a Bessel bridge:
\begin{equation}
    R_t := \beta(\tau^{*} - t) - \tilde{X}^i_{\tau^{*}}, \qquad T_i \leq t \leq \tau^{*},
    \label{eq:BB_def}
\end{equation}
which is generated exactly and then inverted to obtain the corresponding $\tilde{X}^i_t$ path. Note that the Bessel bridge remains strictly positive by construction \cite{hernandez2013hitting}. Finally, the sample is reweighted using the Poisson thinning to approximate the exponential term in Equation~\eqref{eq:eta_def}, and accepted accordingly. More details are available in \cite{khurana2024exact, herrmann2019exact}.
\begin{remark}
The Bessel bridge construction ensures that the process $\tilde{X}^i_t$ remains below the moving boundary $\beta(t)$ up to $\tau^{*}$, with exact hitting at the endpoint. This guarantees that the first-passage condition is satisfied by construction.
\end{remark}
\begin{algorithm}[H]
\caption{Exact simulation of the FPT for the continuous dynamics}
\label{alg:fpt_simulation}
\begin{algorithmic}[1]
\Require $\beta(T_i) > X^{i,\infty}_{T_i}$
\State \textbf{Input:} candidate FPT $\tau^*$ of Brownian motion to $\beta(t)$; Poisson bound $\kappa$
\vspace{0.1cm}
\State initialise: $l \gets (0,0,0)$; $\mathcal{W} \gets 0$; $\mathcal{E}_0 \gets 0$; $\mathcal{E}_1 \sim \text{Exp}(\kappa)$
\vspace{0.1cm}
\While{$\mathcal{E}_1 \leq (\tau^* - T_i)$ and $\mathcal{W} = 0$}
    \State sample: $G \sim \mathcal{N}(0,I_3)$; $e \sim \text{Exp}(\kappa)$; $\mathcal{U} \sim \mathcal{U}(0,1)$
    \vspace{0.1cm}
    \State update:
    \begin{equation*}
       l \gets \frac{\tau^* - T_i - \mathcal{E}_1}{\tau^* - T_i - \mathcal{E}_0} \cdot l + \sqrt{\frac{(\tau^* - T_i - \mathcal{E}_1)(\mathcal{E}_1 - \mathcal{E}_0)}{\tau^* - T_i - \mathcal{E}_0}} \cdot G 
    \end{equation*}
    and
    \begin{equation*}
      \xi \gets \beta(\mathcal{E}_1 + T_i) - \left\Vert \frac{\mathcal{E}_1(\beta(T_i) - X^{i,\infty}_{T_i})}{\tau^* - T_i}(1,0,0) + l \right\Vert  
    \end{equation*}
    \If{$\kappa \cdot \mathcal{U} \leq \gamma_1\big((\mathcal{E}_1 + T_i),z_i\big) + \gamma_2\big((\mathcal{E}_1 + T_i),\,(\xi,z_i)\big)$}
        \State $\mathcal{W} \gets 1$ \Comment{reject sample}
    \Else
        \State $\mathcal{E}_0 \gets \mathcal{E}_1$ and $\mathcal{E}_1 \gets \mathcal{E}_1 + e$
    \EndIf
\EndWhile

\If{$\mathcal{W} = 0$}
    \State set $\tau_{\text{out}} \gets \tau^*$ \Comment{accept sample}
\Else
    \State repeat with new $\tau^*$ \Comment{rejection sampling}
\EndIf
\State \textbf{Output:} $\tau_{\text{out}}$
\end{algorithmic}
\end{algorithm}
\begin{remark}
\begin{enumerate}
    \item The algorithm relies on three key components; $(i)$ a 3-dimensional Bessel bridge construction to ensure that $\tilde{X}_t^i < \beta(t)$ for all $t < \tau^{*}$, with exact hitting at the endpoint $\tilde{X}_{\tau^{*}}^i = \beta(\tau^{*})$; $(ii)$ Poisson thinning with rate $\kappa$ to evaluate the exponential weight $\eta^{[1]}(t)$ and $(iii)$ an exact rejection sampling scheme to preserve the law of the FPT.  For a detailed theoretical justification, we refer the interested reader to \cite{herrmann2019exact, beskos2006retrospective, beskos2005exact}.
     \item The output $\tau_{\text{out}}$ is an exact sample from the desired FPT distribution $\tau_\beta^{\text{cont}, i}$, as established in \cite{khurana2024exact}.
\end{enumerate}
\end{remark}

\subsection{Simulation of the tracking process at the jump}
\label{subsection3.3}

\noindent This section addresses Step~3.b.i of the hybrid exact FPT simulation scheme introduced in Subsection~\ref{hybrid_exact_algorithm}. The goal is to simulate the value of the tracking process $X^{i,\infty}$ at the next jump time $T_{i+1}$, conditional on the event that the threshold is not crossed before this time, i.e.
\begin{equation*}
    \left\{ X^{i,\infty}_{T_{i+1}} \,\middle|\, \tau_{\beta}^{\text{cont},i} \geq T_{i+1} \right\}.
\end{equation*}
\noindent While we have already established an Exact simulation method for the FPT of $X^{i,\infty}$, the conditional sampling procedure above introduces additional complexity.\\
\noindent To address this, we propose a rejection-based technique that enables exact sampling under the constraint $\tau_{\beta}^{\text{cont},i} \geq T_{i+1} $. The method involves two main components:
\begin{itemize}
    \item[1.] \textit{an auxiliary process:} a Brownian path conditioned to stay below the threshold up to the next jump time;
    \item[2.] \textit{factorised Girsanov weight:} a product of exponential terms derived from the Girsanov transformation, split across time intervals and evaluation points (e.g., jump), and used to determine path acceptance.
\end{itemize}
\noindent These components are challenging to construct in the presence of a time-dependent threshold. For the particular case of a constant threshold, the distribution of the conditioned Brownian path is explicit, as discussed in \cite{herrmann2023exact}. 
In the following, we present the procedure for simulating the conditional Brownian path
\begin{equation}
\left\{ (W_t)_{T_{i} \leq t \leq T_{i+1}} \,\middle|\, \tau^{*} \geq T_{i+1} \right\}.
\label{eq:aux_process}
\end{equation}

\begin{remark}
Although our final goal is to obtain the value of the process $Y^{i,\infty}_{T_{i+1}}$, we first simulate the corresponding value $X^{i,\infty}_{T_{i+1}}$ of the Lamperti-transformed process. The original value can then be recovered using the inverse transformation from Proposition~\ref{Lampt}.
\end{remark}
\noindent The key idea is to generate a point from a sample Brownian path that remains below the threshold $\beta(t)$ until time $T_{i+1}$. This is a nontrivial conditioning problem due to the time-dependence of the threshold. However, we use the fact that the FPT of Brownian motion to a straight line is analytically tractable, see \cite{karatzas1991brownian} for more details. We construct linear thresholds initialising from the threshold $\beta$ with a chosen slope that is strictly less than the minimum slope of $\beta$, ensuring that the Brownian motion stays below the original threshold $\beta$. This construction helps tracking the path of Brownian motion and consequently giving a point from a path of such conditional process. Using this point we later generate the entire path. More precisely, the procedure is structured as follows.
\begin{enumerate}
    \item Choose a constant slope $s \in \mathbb{R}$ such that
    \begin{equation*}
        s < \inf_{t \geq T_i} \beta^{'}(t).
    \end{equation*}
    This ensures that all constructed linear barriers lie strictly below the threshold $\beta(t)$.
   \item  Starting from $(T_i, \beta(T_i))$, define the line $b_1(t)$ with slope $s$. Simulate the FPT $t_1$ of a Brownian motion to this line.
   \item Compare with $T_{i+1}$:
    \begin{itemize}
        \item[i] If $t_1 \geq T_{i+1}$, reduce the slope $s$ further and repeat Step 2. This step may need to be repeated multiple times, decreasing $s$ until $t_1< T_{i+1}$. Moreover, if $T_{i+1}-T_i$ is very small, this adjustment may fail even for small $s$. In this case, a minimum slope $s_{\min}$ is enforced to avoid infinite reduction of $s$.
        \item[ii] If $t_1 < T_{i+1}$, proceed to the next step.
    \end{itemize}
    \item Construct a second line $b_2(t)$ with the same slope $s$ starting from $(t_1, \beta(t_1))$, and simulate the FPT $t_2$ of a Brownian motion starting from this point to $b_2(t)$. Repeat this process to construct lines $b_j(t)$ and corresponding times $t_j$ until some $t_c > T_{i+1}$ is reached. Here, the random point generated by this construction is denoted by $(t_c, \beta_c(t_c))$.
\end{enumerate}
\noindent If at any step the distance between $\beta(t_j)$ and $b_j(t_j)$ is very small before the jump time, it indicates that the Brownian path has crossed the threshold $\beta$ before $T_{i+1}$. In that case, discard the current trajectory and return to Step 2 with a new Brownian sample path by finding a new $t_1$.\\
\noindent This iterative scheme tracks the Brownian path using the hitting times to linear thresholds that lie strictly below the original threshold $\beta$, by construction of their slopes. This ensures that the entire trajectory remains below $\beta(t)$ up to time $T_{i+1}$. We summarise the full procedure in Algorithm~\ref{alg:point_conditional_Brownian}. Further, an illustration of this construction is provided in Figure~\ref{fig:cases_alg_cond_BM} and Figure~\ref{fig:ideal_alg_cond_BM}.
\begin{figure}[H]
    \centering
    \begin{subfigure}[t]{0.46\textwidth}
        \centering
        \includegraphics[width=\linewidth]{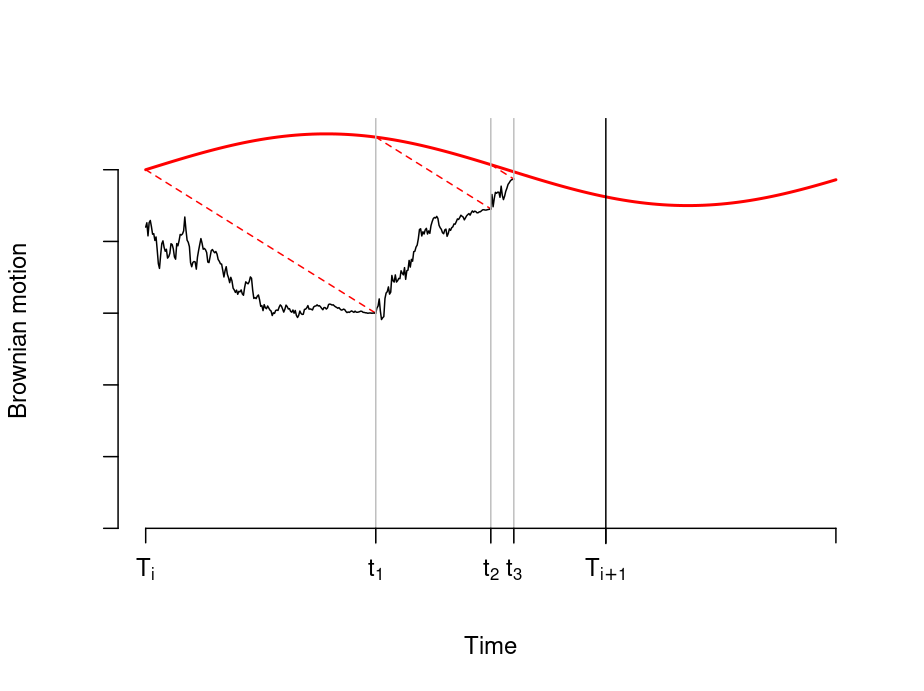}
        \caption{The distance between the constructed linear threshold and the original threshold $\beta$ is very small at $t_3$ before $T_{i+1}$. We generate a new $t_1$.}
        \label{fig:sub1_alg_cond_BM}
    \end{subfigure}
    \hfill
    \begin{subfigure}[t]{0.46\textwidth}
        \centering
        \includegraphics[width=\linewidth]{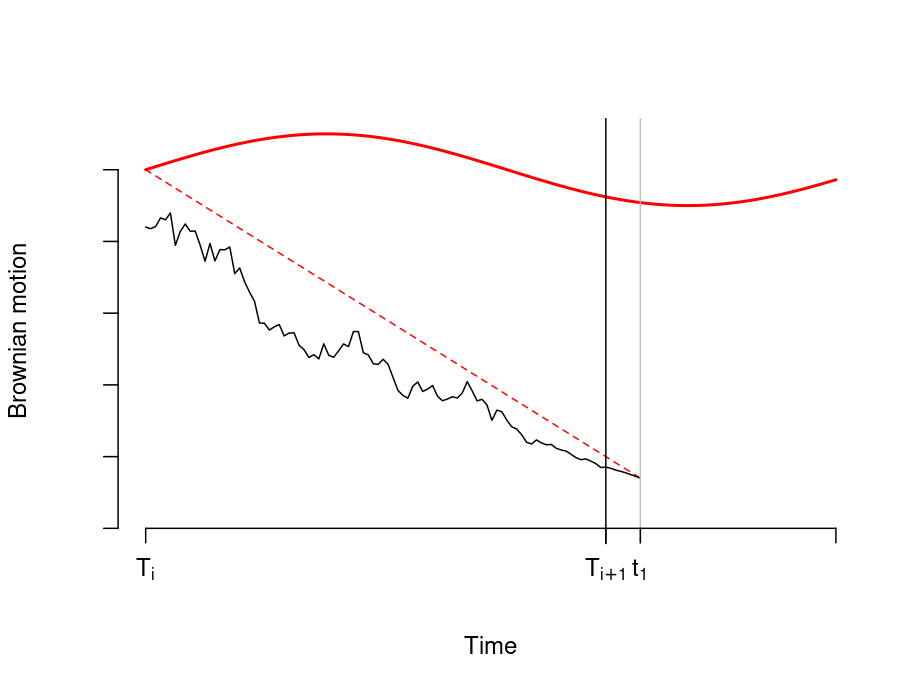}
        \caption{If $t_1 \geq T_{i+1}$, we reduce the slope and resample $t_1$ for the linear threshold with the new slope.}
        \label{fig:sub2_alg_cond_BM}
    \end{subfigure}
    \caption{Cases requiring resampling during the construction of $\big(t_c, \, \beta_c(t_c)\big)$.}
    \label{fig:cases_alg_cond_BM}
\end{figure}

\noindent In Figure~\ref{fig:cases_alg_cond_BM} depicts the two failure scenarios that can be encountered during the construction of the point $(t_c, \beta_c(t_c))$. In the first panel, the constructed line at its hitting time is too close to $\beta$ before the next jump time $T_{i+1}$, indicating that the Brownian path likely hit the original threshold $\beta$ before $T_{i+1}$, which is against the condition of the desired process \eqref{eq:aux_process}. Thus we resimulate $t_1$ for a new Brownian path.\\
In the second panel, the hitting $t_1$ to the constructed linear threshold exceeds $T_{i+1}$. Fixing a single value for slope $s$ would correspond to a fixed linear threshold $b_1$ as the intercept is initially fixed as $\beta(T_i)$. Choosing $t_c \gets t_1$ in this case would implicitly mean assuming that the hitting time to this fixed threshold $b_1$ is finite. To avoid this bias, we adapt $b_1$ based on whether $t_1 \geq T_{i+1}$.
\begin{figure}[H]
    \centering
    \includegraphics[width=0.55\textwidth]{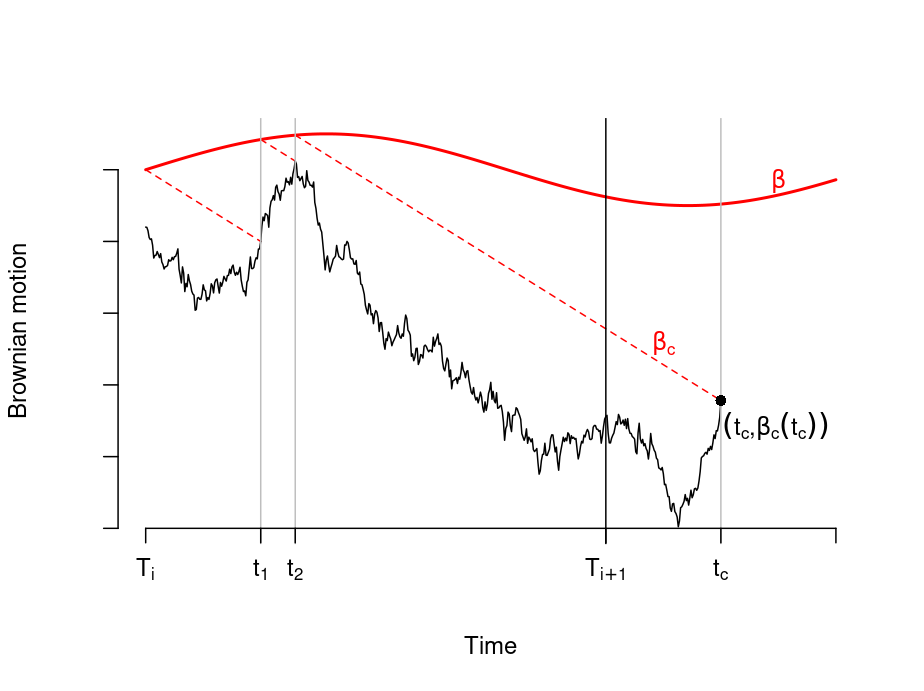}
    \caption{Ideal scenario: a valid $t_j > T_{i+1}$ is found without triggering the cases in Figure~\ref{fig:cases_alg_cond_BM}. We accept $t_c \gets t_j$.}
    \label{fig:ideal_alg_cond_BM}
\end{figure}
\noindent Figure~\ref{fig:ideal_alg_cond_BM} illustrates a scenario that validates the feasibility of our linear threshold based construction scheme. It shows that, under an appropriate choice of slope, the Brownian path can be guided to stay below the threshold without violating the conditioning constraint. Unlike the case described in Figure \ref{fig:sub2_alg_cond_BM}, this scenario would not have any bias as the intercept for $\beta_c$ is not fixed; instead the intercepts of the linear thresholds were constructed based on the path.

\begin{algorithm}[h]
\caption{Simulation of a point from a Brownian path conditioned on $\tau^{*}\geq T_{i+1}$}
\label{alg:point_conditional_Brownian}
\begin{algorithmic}[1]
\State \textbf{Input:} initial slope $s < \inf_{t \geq T_i} \beta'(t)$, minimum slope $s_{\text{min}}$, decrement $s_1$, tolerance $\epsilon$, initial value $X^{i}_{T_i}$
\vspace{0.2em}
\Repeat
  \State simulate $t_1$ FPT of Brownian motion to $b_1(t) = s(t - T_i) + \beta(T_i)$ starting at $T_i$
  \If{$t_1 \geq T_{i+1}$}
    \If{$s > s_{\text{min}}$}
      \State decrease slope: $s \gets s - s_1$
    \Else
      \State set $t_c \gets t_1$, $x \gets b_1(t_1)$
      \State \textbf{break} \Comment{accept path and exit}
    \EndIf
  \Else
    \State compute the distance: $d \gets \beta(t_1) - b_1(t_1)$
    \If{$d > \epsilon$}
      \State $j \gets 2$
      \While{$d > \epsilon$}
        \State simulate $t_j$ FPT of Brownian motion to $b_j(t) = s(t - t_{j-1}) + \beta(t_{j-1})$ starting from $t_{j-1}$
        \State $d \gets \beta(t_j) - b_j(t_j)$
        \If{$t_j \geq T_{i+1}$}
          \State \textbf{break} \Comment{exit while-loop}
        \Else
          \State $j \gets j + 1$
        \EndIf
      \EndWhile
      \If{$t_j \geq T_{i+1}$}
        \State set $t_c \gets t_j$, $x \gets b_j(t_j)$
        \State \textbf{break} \Comment{accept path and exit}
      \EndIf
    \EndIf
  \EndIf
\Until{false}
\State \textbf{Output:} $(t_c, x)$, where $x = \beta_c(t_c)$
\end{algorithmic}
\end{algorithm}
\noindent To ensure the well-posedness of the Algorithm~\ref{alg:point_conditional_Brownian}, it is essential to verify that the constructed point $(t_c, \beta_c(t_c))$ indeed belongs to a Brownian path conditioned to stay below the threshold $\beta$ until time $T_{i+1}$. The following theorem establishes the validity of the Algorithm.
\begin{theorem}
\label{thm:valid_alg_2}
Assume that the slope parameter satisfies $s_{\text{min}} = -\infty$. Then, as the tolerance $\epsilon \to 0$, the output $(t_c, \beta_c(t_c))$ of Algorithm~\ref{alg:point_conditional_Brownian} corresponds to a point sampled from a path of Brownian motion conditioned to stay below the threshold up to time $T_{i+1}$, as presented in Equation \eqref{eq:aux_process}.
\end{theorem}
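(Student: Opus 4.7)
The plan is to combine a geometric argument about the piecewise linear lower barriers with a limit analysis in $\epsilon$, showing that the output $(t_c, \beta_c(t_c))$ coincides in distribution with a sampled point on a Brownian path conditioned on $\{\tau^{*} \geq T_{i+1}\}$.

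First, I will exploit the slope condition $s < \inf_{t \geq T_i} \beta'(t)$. Each segment $b_j(t) = s(t - t_{j-1}) + \beta(t_{j-1})$ satisfies $b_j(t_{j-1}) = \beta(t_{j-1})$ and, by the mean-value theorem, $b_j(t) < \beta(t)$ for every $t > t_{j-1}$. This is the cornerstone of the argument: any Brownian motion $W$ first reaching $b_j$ at time $t_j$ must have stayed strictly below $b_j$, and therefore below $\beta$, throughout $[t_{j-1}, t_j)$, with $W_{t_j} = b_j(t_j) < \beta(t_j)$. Inductively, the random sequence $(t_j)$ is a.s.\ well-defined, strictly increasing, and satisfies $W_t < \beta(t)$ for every $t \in [T_i, t_j]$.

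Next, I identify the algorithm's acceptance event with the conditioning event. Let $j^{*} := \inf\{ j \geq 1 : t_j \geq T_{i+1} \}$, with $\inf \emptyset = +\infty$. On $\{\tau^{*} \geq T_{i+1}\}$, the sequence $(t_j)$ cannot accumulate inside $[T_i, T_{i+1}]$: if $t_j \uparrow t^{*} \leq T_{i+1}$, then $b_j(t_j) = \beta(t_{j-1}) + s(t_j - t_{j-1}) \to \beta(t^{*})$ by continuity, hence $W_{t^{*}} = \beta(t^{*})$, contradicting $\tau^{*} > t^{*}$. Thus $j^{*} < \infty$ a.s.\ on this event. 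Conversely, $j^{*} < \infty$ implies $W_t < b_{j^{*}}(t) < \beta(t)$ on $[t_{j^{*}-1}, T_{i+1}]$, so $\tau^{*} \geq T_{i+1}$. Up to a null set, $\{j^{*} < \infty\} = \{\tau^{*} \geq T_{i+1}\}$, and on this event the gap $d_j := \beta(t_j) - b_j(t_j)$ is strictly positive for each $j \leq j^{*}$; since $j^{*}$ is a.s.\ finite, the minimum $d^{*} := \min_{j \leq j^{*}} d_j$ is a.s.\ strictly positive.

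Finally, letting $\epsilon \to 0$, the assumption $s_{\min} = -\infty$ ensures the outer loop never exits via the forced acceptance branch $t_c \gets t_1$: since the FPT of Brownian motion to the line through $(T_i,\beta(T_i))$ of slope $s$ concentrates near $T_i$ as $s \to -\infty$, the slope reduction eventually yields $t_1 < T_{i+1}$ a.s., so the main $j$-loop is always entered. Because $d^{*} > 0$ a.s., the rejection event $\{d_j \leq \epsilon \text{ for some } j \leq j^{*}\}$ shrinks to a null set by dominated convergence, and the algorithm returns the pair $(t_{j^{*}}, W_{t_{j^{*}}})$ on the event $\{\tau^{*} \geq T_{i+1}\}$. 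Since conditioning on algorithmic success coincides with conditioning on $\{\tau^{*} \geq T_{i+1}\}$, the output is a point of a Brownian trajectory under the conditional law \eqref{eq:aux_process}. The main obstacle is the non-accumulation argument for $(t_j)$ together with the a.s.\ positivity of $d^{*}$; both hinge on the strict slope condition combined with the continuity of $W$ and $\beta$, and some care is needed around the measure-zero boundary case $t_{j^{*}} = T_{i+1}$.
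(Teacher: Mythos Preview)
Your proposal is correct and follows essentially the same approach as the paper: both argue that the slope condition forces $b_j<\beta$, that $s_{\min}=-\infty$ removes the bias from the fixed first barrier, and that $\epsilon\to 0$ makes the crossing detection exact so that acceptance coincides with the event $\{\tau^{*}\geq T_{i+1}\}$. You supply more rigorous detail than the paper does---in particular the non-accumulation argument for $(t_j)$ and the a.s.\ positivity of $d^{*}$---but these are precisely the points the paper's proof leaves implicit, so the underlying strategy is the same.
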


\begin{proof}
The Algorithm~\ref{alg:point_conditional_Brownian} is designed to generate a point $(t_c, \beta_c(t_c))$ from a Brownian path that remains below the threshold $\beta(t)$ up to time $T_{i+1}$. More precisely, it constructs a sequence of linear thresholds $b_j(t)$ with slope $s < \inf_t \beta'(t)$ and simulates the FPTs $t_j$ to these lines, starting from $(t_{j-1}, \beta(t_{j-1}))$, with $t_0 = T_i$. This procedure continues until a time $t_j \geq T_{i+1}$ is reached, and the corresponding linear barrier $b_j$ is denoted by $\beta_c$. The final output is the point $(t_c, \beta_c(t_c)) := (t_j, b_j(t_j))$. Therefore, we aim to show here that in the limit as $\epsilon \to 0$ and $s_{\text{min}} = -\infty$, the law of the process 
\begin{equation*}
 \{W_t \mid t \in [T_i, t_c],\ W_{t_c} = \beta_c(t_c),\ t_c \geq T_{i+1} \}   
\end{equation*}
coincides with that of the Brownian motion conditioned to stay below $\beta$ until $T_{i+1}$, i.e., the process defined in Equation \eqref{eq:aux_process}.
By the definition of the accepted point $t_c$, we have that $t_c \geq T_{i+1}$ and $W_t < \beta(t)$ for all $t \in [T_i, t_c)$. Therefore, the simulated paths satisfy $\tau^{*} > T_{i+1}$ and hence we have that
\begin{equation*}
  \text{sample paths of } \{W_t \mid t_c \geq T_{i+1},\ W_{t_c} = \beta_c(t_c)\}
\subseteq
\text{sample paths of } \{W_t \mid \tau^* \geq T_{i+1}\}.  
\end{equation*}
Now, consider the probability,
\begin{equation*}
 \mathbb{P}\{W_t \mid t_c \geq T_{i+1} ,\beta_c(t_c) = W_{t_c}\}.    
\end{equation*}
The pair $(t_j,b_j)$ for $j\geq 2$ is constructed by tracking the Brownian path such that each $t_j$ satisfies $b_{j}(t_j)=W_{t_j}$. Thus, $\beta_c(t_c)=W_{t_c}$ is a sure event except if we choose $t_c=t_1$, in which case $b_1$ is fixed. Since we start with the fixed slope $s$ and a fixed initial point $\beta(T_i)$ for $b_1$, assuming that FPT to $b_1$ is finite would introduce bias. To avoid this, the algorithm reduces the slope if $t_1\geq T_{i+1}$, repeating this step as needed.\\
Therefore if we do not impose any restriction on the minimum slope, i.e. if $s_\text{min}=-\infty$, this ensures that $t_1<T_{i+1}$ and hence
\[
\beta_c(t_c)=W_{t_c}
\]
is a sure event. Consequently we can write:
\[
\mathbb{P}\{W_t \mid t_c \geq T_{i+1} ,\beta_c(t_c) = W_{t_c}\}=\mathbb{P}\{W_t \mid t_c \geq T_{i+1}\}.
\]
The probability on the right describes the path of Brownian motion which stays below the threshold $\beta$ at least until time $t_c$. On the other hand, the process defined in \eqref{eq:aux_process} corresponds to Brownian motion conditioned to stay below $\beta$ until $\tau^*$. If the crossing from $\beta$ is detected exactly, then both probabilities are equivalent on the interval $[T_i,t_c]$, i.e.
\[
  \mathbb{P}\{W_t \mid t_c \geq T_{i+1}\} = \mathbb{P}\left\{ W_t \,\middle|\, \tau^{*} \geq T_{i+1} \right\} \qquad t\in[T_i,t_c].
\]
Accurate detection of threshold crossing is essential. For instance, if a large $\epsilon$ is chosen, the algorithm may falsely reject paths that actually remain below $\beta$ until $T_{i+1}$. These paths would be valid under the law of the auxiliary process $\{W_t \mid \tau^*\geq T_{i+1}\}$ but would be wrongly excluded by the algorithm. Hence, precise detection of crossings is crucial to ensure convergence to the correct conditional law.
\end{proof}

\noindent Now that a valid point $(t_c, \beta_c(t_c))$ has been sampled, the next step is to simulate the entire Brownian trajectory on the time interval $[T_i, t_c]$ that stays below the threshold $\beta$ and passes exactly through this point $(t_c, \beta_c(t_c))$.\\
\noindent To construct such a path, we use the fact that the following process
\begin{equation*}
  R_h = \beta(t_c - h) - W_{t_c - h}, \qquad 0 \leq h \leq t_c - T_i,  
\end{equation*}
is a Bessel bridge. We start by simulating a Bessel bridge $R_h$ on $[0, t_c - T_i]$ starting at
\begin{equation*}
R_0 = \beta(t_c) - \beta_c(t_c),    
\end{equation*}
and ending at
\begin{equation*}
 R_{t_c - T_i} = \beta(T_i) - X^{i,\infty}_{T_i}.   
\end{equation*}
After generating the Bessel bridge $R_h$, we invert the transformation to recover the Brownian path $W_t$ over $[T_i, t_c]$. This path will be our \emph{auxiliary process} for the rejection sampling algorithm.
\begin{remark}
One might consider directly using the FPT $\tau_{\beta}^{\text{cont}, i}$ and generating a Bessel bridge on $[T_i, \tau_{\beta}^{\text{cont}, i}]$ starting at $0$ and ending at $\beta(T_i) - W_{T_i}$. However, this would lead to biased sampling because it excludes Brownian paths that remain strictly below the threshold and never hit it.
\end{remark}
\noindent We now turn to the key ingredient of the Exact simulation method: the \emph{rejection probability}. While we are ultimately interested only in the value of the process at time $T_{i+1}$, the acceptance-rejection step must be performed on the entire path over $[T_i, t_c]$, since the law of the auxiliary process depends on its endpoint at $t_c$.\\
\noindent Following again the Girsanov-based framework introduced in \cite{beskos2005exact,beskos2006retrospective,herrmann2019exact}, the acceptance probability is derived from a change of measure between the diffusion process and the auxiliary Brownian motion. In this setting, for any bounded measurable functional $\Psi$, we have
\begin{equation*}
\mathbb{E}_{\mathbb{P}}\left[\Psi\left(X^{i,\infty}_t\right)_{T_i \leq t \leq t_c} \right] = 
\mathbb{E}_{\mathbb{Q}}\left[\Psi\left(\tilde{X}^i_t\right)_{T_i \leq t \leq t_c} \cdot \eta^{[2]}(t)\right],    
\end{equation*}
where
\begin{equation}
\eta^{[2]}(t) = \exp\left(
A\left(t_c,\,( \tilde{X}^i_{t_c}, z_i)\right)
- \int_{T_i}^{t_c} \gamma_2\left(t,\, (\tilde{X}^i_t, z_i)\right) \, dt
\right)
\end{equation}
is the rejection probability.\\
\noindent
As discussed in Subsection~\ref{subsection3.2}, directly evaluating the acceptance probability $\eta^{[2]}(t)$ is not feasible in closed form. Therefore, we employ a Poisson thinning representation that allows the acceptance or rejection of a path based on finitely many sampled time points over the interval $[T_i, t_c]$. However, with this approach it is not guaranteed that one of the sampled points coincides exactly with the jump time $T_{i+1}$, which is essential in our context.\\
\noindent To address this, we decompose the Girsanov weight in the interval $[T_i, t_c]$ as follows. First, we have
\begin{align*}
\mathbb{E}_{\mathbb{P}}\left[\Psi\left(X_t^{i,\infty}\right)_{T_i \leq t \leq t_c}\right]
= \mathbb{E}_{\mathbb{Q}}\Bigg[
\Psi\left(\tilde{X}^i_t\right) \exp\Bigg(
\int_{T_i}^{t_c} \alpha\big(t,\,(\tilde{X}^i_t, z_i)\big) \, dB_t 
- \frac{1}{2} \int_{T_i}^{t_c} \alpha^2\big(t,\, ( \tilde{X}^i_t, z_i)\big) \, dt
\Bigg)
\Bigg].
\end{align*}
\noindent We then split the integrals at $T_{i+1}$ as follows
\begin{align*}
\mathbb{E}_{\mathbb{P}}\left[\Psi\left(X_t^{i,\infty}\right)_{T_i \leq t \leq t_c}\right]
= \mathbb{E}_{\mathbb{Q}}\Bigg[
\Psi\left(\tilde{X}^i_t\right) \exp\Bigg(
&\int_{T_i}^{T_{i+1}} \alpha\big(t,\, (\tilde{X}^i_t, z_i)\big) \, dB_t 
- \frac{1}{2} \int_{T_i}^{T_{i+1}} \alpha^2\big(t,\, (\tilde{X}^i_t, z_i)\big) \, dt \\[0.22cm]
+ &\int_{T_{i+1}}^{t_c} \alpha\big(t,\, ( \tilde{X}^i_t, z_i)\big) \, dB_t 
- \frac{1}{2} \int_{T_{i+1}}^{t_c} \alpha^2\big(t,\, ( \tilde{X}^i_t, z_i)\big) \, dt
\Bigg)
\Bigg].
\end{align*}
\noindent Applying Itô's formula to the function $A(t,\,(x, z_i))$ yields the following decomposition:
\begin{align}
\mathbb{E}_{\mathbb{P}}\left[\Psi\left(X_t^{i,\infty}\right)\right]
= & \; \mathbb{E}_{\mathbb{Q}}\left[\Psi\left(\tilde{X}^i_t\right)\right]
\cdot \exp\Big({-A\big(T_i,\,( \tilde{X}^i_{T_i}, z_i)\big)}\Big) \cdot
\mathbb{E}_{\mathbb{Q}}\left[\exp\left(-\int_{T_i}^{T_{i+1}} \gamma_2\big(t,\, (\tilde{X}^i_t, z_i)\big) \, dt \right)\right] \notag \\[0.31cm]
& \cdot \mathbb{E}_{\mathbb{Q}}\left[\exp\Big({A\big(T_{i+1},\, (\tilde{X}^i_{T_{i+1}}, z_i)\big)}\Big)\right] 
\cdot \exp\Big({-A\big(T_{i+1}, \, (\tilde{X}^i_{T_{i+1}}, z_i)\big)}\Big) \notag \\[0.3cm]
& \cdot \mathbb{E}_{\mathbb{Q}}\left[\exp\left(-\int_{T_{i+1}}^{t_c} \gamma_2\big(t,\, (\tilde{X}^i_t, z_i)\big) \, dt \right)\right] \cdot \mathbb{E}_{\mathbb{Q}}\left[\exp\Big({A\big(t_c,\,( \tilde{X}^i_{t_c}, z_i)\big)}\Big)\right].
\end{align}
\noindent This decomposition shows that the overall rejection probability can be broken down into four distinct components, each of which must be evaluated separately.
\begin{itemize}
    \item[1.] Rejection probability on $[T_i, T_{i+1}]$:
    \begin{equation*}
        \mathbb{E}_{\mathbb{Q}}\left[\exp\left(-\int_{T_i}^{T_{i+1}} \gamma_2\big(t,\, (\tilde{X}^i_{t}, z_i)\big) \, dt\right)\right].
    \end{equation*}
    This term can be computed using Poisson thinning, as in \cite{beskos2006retrospective}, applying Theorem~\ref{thm:poisson_thinning} with substitutions $\gamma_1 + \gamma_2 \mapsto \gamma_2$, $\kappa \mapsto \kappa_2$, and $\mathcal{T}_\beta \mapsto T_{i+1}$, where $\kappa_2 \geq \sup \gamma_2$.

    \item[2.] Acceptance weight at time $T_{i+1}$:
    \begin{equation*}
\mathbb{E}_{\mathbb{Q}}\left[\exp\Big({A\big(T_{i+1},\, (\tilde{X}^i_{T_{i+1}}, z_i)\big)}\Big)\right].
    \end{equation*}

    \item[3.] Rejection probability on $[T_{i+1}, t_c]$:
    \begin{equation*}
        \mathbb{E}_{\mathbb{Q}}\left[\exp\left(-\int_{T_{i+1}}^{t_c}  \gamma_2\big(t,\, (\tilde{X}^i_{t}, z_i)\big) \, dt\right)\right],
    \end{equation*}
    which is again evaluated via thinning on the interval $[T_{i+1}, t_c]$.

    \item[4.] Acceptance weight at final time $t_c$:
    \begin{equation*}
     \mathbb{E}_{\mathbb{Q}}\left[\exp\Big({A\big(t_{c},\, (\tilde{X}^i_{t_{c}}, z_i)\big)}\Big)\right].   
    \end{equation*}
\end{itemize}

\noindent All four components must yield sufficiently high values to accept the auxiliary process \eqref{eq:aux_process}. If any one of them results in a rejection, a new candidate point $(t_c, \beta_c(t_c))$ is generated via Algorithm~\ref{alg:point_conditional_Brownian}, and the process is restarted with a new Brownian trajectory.\\
\noindent To ensure the well-posedness of the rejection probability and the validity of the thinning method, we impose the following conditions.

\begin{assumption}
\label{ass:kappa2andA+}
\begin{enumerate}[label=(\roman*)]
    \item The function $\gamma_2$ is uniformly bounded:
    \begin{equation*}
        0 \leq \gamma_2\left(t, (x, z_i)\right) \leq \kappa_2.
    \end{equation*}

    \item The function $A(t,\,(x,z_i))$ is bounded above by a constant $A^+ > 0$.
\end{enumerate}
\end{assumption}
\begin{algorithm}[H]
\caption{Exact simulation of the diffusion $X^{i,\infty}$ at $T_{i+1}$ conditioned on $\tau^{*}\geq T_{i+1}$}
\label{alg:trajectory of continuous part until jump}
\begin{algorithmic}[1]
\State \textbf{Input:} $(t_c,\beta_c(t_c)) \sim$ Algorithm~\ref{alg:point_conditional_Brownian}, \quad $\kappa_2 \geq \sup_{(t,x)} \gamma_2(t,\,(x,z))$ and $X^{i,\infty}_{T_i}$
\vspace{0.5em}
\Function{Rejection-Sampling}{$t_{\text{start}}, t_{\text{end}}, x_{\text{start}}$}
    \State initialise: $l \gets c\big((\beta(t_c) - \beta_c(t_c)),\, 0, 0\big)$, \quad $\mathcal{E}_0 \gets 0$, \quad $\mathcal{E}_1 \gets \text{Exp}(\kappa_2)$, \quad $\mathcal{W} \gets 0$
    \While{$\mathcal{E}_1 < \big(t_{\text{end}} - t_{\text{start}}\big)$ and $\mathcal{W} = 0$}
        \State Sample $G \sim \mathcal{N}(0, I_3)$, \quad $e \sim \text{Exp}(\kappa_2)$, \quad $\mathcal{U} \sim \mathcal{U}(0,1)$
        \State update:
        \begin{equation*}
        l \gets \frac{(t_c - t_{\text{start}}) - \mathcal{E}_1}{(t_c - t_{\text{start}}) - \mathcal{E}_0} \cdot l + \sqrt{\frac{((t_c - t_{\text{start}}) - \mathcal{E}_1)(\mathcal{E}_1 - \mathcal{E}_0)}{(t_c - t_{\text{start}}) - \mathcal{E}_0}} \cdot G   \end{equation*}
        \begin{equation*}
            \xi \gets \beta(\mathcal{E}_1 + t_{\text{start}}) - \left\| \dfrac{\mathcal{E}_1(\beta(t_{\text{start}}) - x_{\text{start}})}{t_c - t_{\text{start}}} \right\|
        \end{equation*}

        \If{$\kappa_2 \cdot \mathcal{U} \leq \gamma_2\big((\mathcal{E}_1 + t_{\text{start}}),\,(\xi,z_i)\big)$}
            \State $\mathcal{W} \gets 1$
        \Else
            \State $\mathcal{E}_0 \gets \mathcal{E}_1$, \quad $\mathcal{E}_1 \gets \mathcal{E}_1 + e$
        \EndIf
    \EndWhile
    \State \Return $(\mathcal{W}, \mathcal{E}_1, l)$
\EndFunction

\vspace{0.7em}
\State $(\mathcal{W}, \mathcal{E}_1, l) \gets$ \Call{Rejection-Sampling}{$T_i$, $T_{i+1}$, $X^{i,\infty}_{T_i}$} \Comment{sampling over $[T_i, T_{i+1}]$}
\If{$\mathcal{W} = 0$ and $\mathcal{E}_1 \geq (T_{i+1} - T_i)$}
\vspace{0.4em}
    \State $x_j \gets \beta(T_{i+1}) - \left\| \dfrac{(T_{i+1} - T_i)(\beta(T_i) - X^{i,\infty}_{T_i})}{t_c - T_i} \right\|$
\vspace{0.3em}    
    \State Sample $\mathcal{U} \sim \mathcal{U}(0,1)$
    \If{$\mathcal{U} \cdot e^{A^+} > e^{A\big(T_{i+1}, (x_j, z_i)\big)}$} \Comment{evaluation at $T_{i+1}$}
        \State $\mathcal{W} \gets 1$
        \State restart with a new $(t_c, \beta_c(t_c))$
    \EndIf
\Else
    \State restart with a new $(t_c, \beta_c(t_c))$
\EndIf

\If{$\mathcal{W} = 0$}
    \State $(\mathcal{W}, \mathcal{E}_1, l) \gets$ \Call{Rejection-Sampling}{$T_{i+1}$, $t_c$, $x_j$} \Comment{sampling over $[T_{i+1}, t_c]$}
    \If{$\mathcal{W} = 0$ and $\mathcal{E}_1 \geq t_c - T_{i+1}$}
        \State sample $\mathcal{U} \sim \mathcal{U}(0,1)$
        \If{$\mathcal{U} \cdot e^{A^+} > e^{A\big(t_c,\,( \beta_c(t_c), z_i)\big)}$} \Comment{evaluation at $t_c$}
            \State $\mathcal{W} \gets 1$
            \State restart with a new $(t_c, \beta_c(t_c))$
        \EndIf
    \EndIf
\EndIf
\State \textbf{Output:} $x_j$
\end{algorithmic}
\end{algorithm}

\subsection{Exact algorithm for the FPT of PDifMPs}
We now combine the components developed in the previous sections into a unified scheme for the exact simulation of the FPT of a PDifMP to a time-dependent threshold. This algorithm extends the diffusion-based methods of \cite{khurana2024exact,herrmann2019exact} and the jump-diffusion framework of \cite{herrmann2023exact} to the more general setting of PDifMPs.
\noindent The procedure is built on a piecewise recursive strategy that alternates between continuous diffusion phases and discrete jump updates. The core simulation tasks are:
\begin{itemize}
    \item[(i)] sampling the FPT $\tau_{\beta}^{\text{cont}, i}$ of the auxiliary diffusion $X^{i,\infty}$, using a Girsanov-based Exact method adapted to time- and space-dependent SDEs, see Subsection~\ref{subsection3.2}.
    
    \item[(ii)] simulating the diffusion value at the next jump time, conditioned on no threshold crossing before the jump. This requires constructing an auxiliary process and a tailored rejection step to account for the conditional structure and time-dependent threshold, see Subsection~\ref{subsection3.3}.
\end{itemize}
\noindent Bringing all of these steps together yields an Exact algorithm for simulating the FPT of the PDifMP $(U_t)_{t\geq 0}$. To ensure termination even when the process does not reach the threshold in finite time, in the following Algorithm~\ref{alg:final algorithm} we introduce a fixed terminal time $T_f$. The output is then the FPT if it occurs before $T_f$, and $T_f$ otherwise.
\begin{algorithm}[H]
\caption{Exact simulation of the FPT with terminal time $T_f$}
\label{alg:final algorithm}
\begin{algorithmic}[1]
\State Initialise: terminal time $T_f$, jump index $i \gets 0$, $T_0 \gets 0$, $X^{i,\infty}_{T_{i}}=x_i$, \texttt{crossing} $\gets$ \texttt{FALSE}
\Repeat
    \State sample exponential waiting time $\omega \sim \text{Exp}(\lambda)$ and set $T_{i+1} \gets T_i + \omega$
    \If{$T_i \geq T_f$} 
        \State \texttt{crossing} $\gets$ \texttt{TRUE}, \quad $\tau \gets T_f$
    \Else
        \State simulate $\tau_1 \sim$ Algorithm \ref{alg:fpt_simulation}
        \If{$T_f \leq T_{i+1}$ \textbf{and} $\tau_1 > T_f$}
            \State \texttt{crossing} $\gets$ \texttt{TRUE}, \quad $\tau \gets T_f$
        \EndIf
        \If{$\tau_1 < \min(T_{i+1}, T_f)$}
            \State \texttt{crossing} $\gets$ \texttt{TRUE}, \quad $\tau \gets \tau_1$
        \EndIf  
        \If{$T_f > T_{i+1} \text{ and } \tau_1 \geq T_{i+1}$}
        \Comment{ FPT did not occur before the next jump. Simulate the process at $T_{i+1}$}
            \If{$\tau_1 = T_{i+1}$}
                \State $x_j \gets \beta(T_{i+1})$
            \Else    
                \State $x_j \sim$ Algorithm \ref{alg:trajectory of continuous part until jump}
            \EndIf 
            \State sample $z_i\sim \mathcal{Q}$
             \State compute tracking state: $Y^{i,\infty}_{T_{i+1}} \gets {F^i}^{-1}\big(T_{i+1},\, (x_j, z_i)\big)$
             \vspace{0.2em}
            \State add jump: $Y^{i+1}_{T_{i+1}} \gets Y^{i,\infty}_{T_{i+1}} + j\big(T_{i+1},\, Y^{i,\infty}_{T_{i+1}}, z_i\big)$
            \vspace{0.25em}
            \State transform: $X^{{i+1},\infty}_{T_{i+1}} \gets F^{i+1}\big(T_{i+1},\, (Y^{i+1}_{T_{i+1}}, z_i)\big)$
            \vspace{0.2em}
            \If{$X^{i+1,\infty}_{T_{i+1}} \geq \beta(T_{i+1})$}
                \State \texttt{crossing} $\gets$ \texttt{TRUE}, \quad $\tau \gets T_{i+1}$
            \Else  \qquad $i\gets i+1$    
            \EndIf    
        \EndIf    
    \EndIf
\Until{\texttt{crossing} = \texttt{TRUE}}
\State \textbf{Output:} $\tau$
\end{algorithmic}
\end{algorithm}
\begin{remark}
At each jump time $T_{i+1}$, the continuous component of the PDifMP undergoes a discrete jump. Since the auxiliary process $X^{i,\infty}$ evolves without jumps, we first recover the pre-jump value of the underlying diffusion $Y^{i,\infty}_{T_{i+1}}$ via the inverse Lamperti transform. The jump size is then added to obtain the post-jump value $Y^{i+1}_{T_{i+1}}$, which serves as the initial condition for the next inter-jump interval. Finally, we reapply the Lamperti transform to define $X^{i+1,\infty}$. This ensures the correct initialisation of the next interval in the piecewise simulation.
\end{remark}
\begin{theorem}
Assume that Assumptions \ref{ass: on drift}–\ref{ass:kappa2andA+} hold, and let $s_{\text{min}} = -\infty$. Then, as $\epsilon \to 0$, the distribution of the output of Algorithm~\ref{alg:final algorithm}, i.e. $\tau$, converges in law to the distribution of $\min( \tau_{\tilde{\beta}}, T_f)$, where $\tau_{\tilde{\beta}}$ denotes the FPT of the PDifMP to the threshold $\tilde{\beta}$.
\label{final_thm}
\end{theorem}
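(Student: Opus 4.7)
The overall plan is to proceed by induction on the jump index $i \in \mathbb{N}_0$, showing that at the start of each inter-jump interval $[T_i, T_{i+1})$ the initial state $U_{T_i}$ has the correct conditional law given no prior threshold crossing, and then that one iteration of the main loop respects the true law of the PDifMP. The finite terminal time $T_f$ together with Assumption~\ref{jump_count} ensures that only finitely many iterations occur almost surely, so the recursion terminates and $\tau$ is well-defined. Exactness of each iteration plus finiteness of the recursion then yields convergence in law of $\tau$ to $\min(\tau_{\tilde{\beta}}, T_f)$.

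Within a single interval, I would split the analysis by the three possible outcomes of the loop. First, Algorithm~\ref{alg:fpt_simulation} produces $\tau_1$ with the exact law of $\tau_\beta^{\text{cont},i}$, the FPT of the tracking diffusion $X^{i,\infty}$ to the Lamperti-transformed threshold $\beta$; this is Theorem~\ref{thm:Girsanovprob_FPT} together with the Poisson-thinning construction. Second, on $\{\tau_1 < \min(T_{i+1}, T_f)\}$, bijectivity of $F^i$ implies $\tau_1$ equals the FPT of $Y^{i,\infty}$ to $\tilde{\beta}$, which coincides by definition with the FPT of the full PDifMP inside this interval, so setting $\tau \gets \tau_1$ is exact. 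Third, on $\{\tau_1 \geq T_{i+1}, T_{i+1} < T_f\}$, the algorithm must sample $X^{i,\infty}_{T_{i+1}}$ from the conditional law given $\{\tau_\beta^{\text{cont},i} \geq T_{i+1}\}$, and the jump update then produces $U_{T_{i+1}}$ via the inverse Lamperti map, the jump size $j(T_{i+1}, \cdot)$, and an independent draw from $\mathcal{Q}$, which gives the correct post-jump state.

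The crux is the conditional sampling in the third case, executed by Algorithms~\ref{alg:point_conditional_Brownian} and~\ref{alg:trajectory of continuous part until jump}. Theorem~\ref{thm:valid_alg_2} shows that, in the joint limit $\epsilon \to 0$ and $s_{\min} = -\infty$, the output $(t_c, \beta_c(t_c))$ of Algorithm~\ref{alg:point_conditional_Brownian} is a point on a Brownian trajectory conditioned on $\tau^* \geq T_{i+1}$. Conditioning on this endpoint, the Bessel-bridge construction yields an auxiliary trajectory $\tilde{X}^i$ on $[T_i, t_c]$ under $\mathbb{Q}$ that stays below $\beta$ and hits $\beta_c(t_c)$ at $t_c$. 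The four-factor decomposition of the Girsanov weight derived in Subsection~\ref{subsection3.3}, under Assumptions~\ref{ass:reweighting} and~\ref{ass:kappa2andA+}, then shows that performing the two Poisson-thinning rejection steps on $[T_i, T_{i+1}]$ and $[T_{i+1}, t_c]$ together with the two independent acceptance tests involving $A$ at $T_{i+1}$ and $t_c$ is equivalent to rejection with the joint Girsanov weight. Hence the accepted marginal at $T_{i+1}$ has the law of $X^{i,\infty}_{T_{i+1}}$ conditioned on $\tau_\beta^{\text{cont},i} \geq T_{i+1}$, which is exactly what is required.

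I expect the main obstacle to lie in rigorously justifying the factorisation argument just described. The delicate points are: (i) proving that independent acceptance of the four factors is measure-theoretically equivalent to acceptance with the joint exponential weight, which requires care because the intermediate value $\tilde{X}^i_{T_{i+1}}$ is shared between several factors; (ii) controlling the $\epsilon \to 0$ limit in Theorem~\ref{thm:valid_alg_2} uniformly over the range of slopes used, and arguing that rejected-paths mass vanishes in the limit; and (iii) transporting the exactness of the $X^{i,\infty}_{T_{i+1}}$ sample through the inverse Lamperti map and the jump update to obtain the correct distribution of $U_{T_{i+1}}$, which closes the induction. Once these points are settled, convergence in law of $\tau$ to $\min(\tau_{\tilde{\beta}}, T_f)$ follows by combining the exactness inside each interval with the tower property and the almost-sure finiteness of the jump count up to $T_f$.
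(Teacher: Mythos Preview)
Your proposal is correct and follows essentially the same approach as the paper: decompose the algorithm into (i) exact FPT sampling of the tracking diffusion via Theorem~\ref{thm:Girsanovprob_FPT}, (ii) conditional sampling at the jump time via Theorem~\ref{thm:valid_alg_2} and the Girsanov-weight factorisation, and (iii) the recursive hybrid structure from \cite{herrmann2023exact}, then combine. In fact your version is considerably more detailed than the paper's own proof, which is a short outline deferring the substance to the earlier subsections and cited works; the obstacles you flag (joint acceptance of the four factors, uniformity of the $\epsilon\to 0$ limit) are genuine subtleties that the paper does not address explicitly.
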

\begin{proof}
Since Algorithm~\ref{alg:final algorithm} is a coupling of several steps, the proof of Theorem~\ref{final_thm} follows by combining already well established results. We outline the main arguments here.

\begin{enumerate}[label=(\roman*)]
    \item \emph{Exact FPT sampling.} The simulation of the FPT $\tau_{\beta}^{\text{cont}, i}$ of the tracking diffusion $X^{i,\infty}$ is exact, based on the method introduced in \cite{khurana2024exact, herrmann2019exact}, adapted here to account for time- and space-dependent coefficients via a modified rejection probability.

    \item \emph{Conditional value at jump time.} The simulation of $X^{i,\infty}_{T_{i+1}}$ conditioned on $\tau_{\beta}^{\text{cont}, i} \geq T_{i+1}$ is treated using an auxiliary process and a tailored acceptance-rejection strategy. As shown in Theorem~\ref{thm:valid_alg_2}, the distribution of this approximation converges to the exact conditional law as $\epsilon \to 0$ and $s_{\text{min}} \to -\infty$.

    \item \emph{Recursive structure.} The combination of components (i) and (ii) within the hybrid recursive framework follows the structure presented in \cite{herrmann2023exact}, ensuring that the overall algorithm remains consistent with the target distribution of the FPT.
\end{enumerate}
\noindent Therefore, it is straightforward to show that Algorithm~\ref{alg:final algorithm} produces samples whose distribution converges to that of $\tau_\beta \wedge T_f$ in the limit $\epsilon \to 0$. \qedhere
\end{proof}

\section{Numerical examples}
\label{Numa_exp}

In this section, we apply Algorithm~\ref{alg:final algorithm} to simulate the FPT of a PDifMP in two illustrative examples. To evaluate the accuracy of our method, we compare the results with those obtained using the classical Euler-Maruyama scheme with a fine time discretisation. We present the estimated FPT densities for both approaches and perform statistical tests to  quantify how close the two methods are.

\begin{remark}
Throughout the examples, we assume a constant jump rate $\lambda$. This guarantees the integrability condition~\eqref{eq:lambda_integrability} and the finite-jump condition~\eqref{jump_count}. The results can be extended to time dependent $\lambda$.
\end{remark}

\noindent Algorithm~\ref{alg:final algorithm} depends on two parameters, $\epsilon$ and $s_{\text{min}}$, which influence the accuracy of the simulated FPT. In the examples below, we examine the effect of $s_{\text{min}}$ by varying its value and observing its impact on the resulting density estimates and test outcomes. This complements the theoretical analysis established in Theorem~\ref{thm:valid_alg_2}.

\begin{remark}
We do not explicitly study the impact of the parameter $\epsilon$, as previous works, see for instance \cite{herrmann2016first, khurana2024exact}, already demonstrates that the algorithm accurately detects threshold crossings in the limit $\epsilon \to 0$. Consequently, taking $\epsilon$ sufficiently small guarantees precision in the hitting time estimates.
\end{remark}

\subsection*{Example 1: time-homogeneous PDifMP}
As a first example, we consider a time-homogeneous PDifMP $U_t = (Y_t, Z_t)$ defined by the following dynamics. Let $T_f$ be a deterministic final time, for all $t$ in the inter-jump time intervals $\in [T_{i}, T_{i+1})$, the continuous dynamics of the process $U_t$ are given by

\begin{equation}
   \left\{
\begin{array}{ll}
  dY^{i}_{t} = \big(1.6 + \sin(Y^{i}_{t})\big)\,dt + dB_t, & \quad t \in [T_{i}, T_{i+1}), \\[0.65em]
  dZ^{i}_{t} = 0, & 
\end{array}
\right.
\label{first_exp}
\end{equation}
with initial condition $Y^0_0 = -1$. At jump times $T_{i+1}$ the process is updated as follows:

\begin{equation*}
   \left\{
\begin{array}{ll}
  Y^{i+1}_{T_{i+1}} = Y^{i}_{T_{i+1}} - z_i \sin(Y^{i}_{T_{i+1}}), & \\[0.65em]
  Z^{i+1}_{T_{i+1}} = z_i, & \quad \text{with } z_i \sim \text{Exp}(1)
\end{array}
\right.
\end{equation*}
The characteristic triple $(\phi, \lambda, \mathcal{Q})$ of the PDifMP $U_t$ is given by
\begin{equation}
	\label{charc}
	\left\{
	\begin{array}{ll}
		\phi &= \text{consecutive solutions of the system \eqref{first_exp} on } [T_i, T_{i+1}), \\[0.2cm]
		\lambda(u) &= \lambda, \quad \text{constant jump rate}, \\[0.2cm]
		\mathcal{Q}((y, z), \mathcal{A}) &= \mathbb{P}\left(\left(y - \zeta \sin(y),\, \zeta\right) \in \mathcal{A} \right), \quad \zeta \sim \text{Exp}(1).

	\end{array}
	\right.
\end{equation}

\noindent The time-dependent threshold is defined by the linear function:
\begin{equation*}
  \beta(t) = -t + 1.  
\end{equation*}

\noindent We start by verifying that the PDifMP $U_t$ satisfies the structural assumptions required for the simulation algorithm:
\begin{itemize}
    \item The drift and diffusion coefficients in Equation\eqref{first_exp} are differentiable and bounded, hence satisfy both the Lipschitz continuity and linear growth conditions.
    \item The jump function $j(y,z) = -z\sin(y)$ is bounded considering $z$ is a constant and thus satisfies Assumption~\ref{ass:jump-size}.
\end{itemize}
\noindent We define the tracking process $Y^{i,\infty}$, used in the simulation of the FPT by
\begin{equation*}
 dY^{i,\infty}_t = \left(1.6 + \sin(Y^{i,\infty}_t)\right)\,dt + dB_t, \qquad t \in [T_i, \infty).   
\end{equation*}
To apply Girsanov’s theorem for the exact simulation of the tracking process, we verify that the drift term is regular and satisfies Novikov's condition. The drift is continuously differentiable and bounded, and satisfies the Novikov condition:
\begin{align*}
\mathbb{E}\left[\exp\left(\frac{1}{2} \int_0^{\tau^{*}} \left(1.6 + \sin(Y_s)\right)^2 ds\right)\right]
&= \mathbb{E}\left[\exp\left(\frac{1}{2} \int_0^{\tau^{*}} \left((1.6)^2 + \sin^2(Y_s) + 2 \cdot (1.6) \sin(Y_s)\right) ds\right)\right] \\[0.8em]
&\leq \mathbb{E}\left[\exp\left(36 \cdot \tau^{*}\right)\right].
\end{align*}
\noindent It is straightforward to see that the threshold function is differentiable. Additionally, it is necessary to ensure that the tracking process $Y^{i,\infty}$ does not explode before reaching the threshold. This follows from the fact that the drift is bounded, the diffusion coefficient is Lipschitz continuous, and its square is bounded away from zero. By Proposition 5.17 in \cite{karatzas1991brownian}, these conditions guarantee non-explosion of the solution to the SDE.\\
\noindent We now specify the functions $\gamma_1$, $\gamma_2$, and $A$ associated with this example, and verify that they satisfy the assumptions required for the application of the exact simulation method:
\begin{align*}
\gamma_1(t) &= 1.6 + \sin(-t + 1), \\[0.65em]
\gamma_2(t, (y, z_i)) &= \frac{(1.6 + \sin(y))^2 + \cos(y)}{2}, \\[0.65em]
A(t, (y, z_i)) &= 1.6x - \cos(y).
\end{align*}

\noindent The function $\gamma_1$ is bounded and strictly positive, taking values in the interval $[0.6, 2.6]$. Numerical evaluation confirms that $\gamma_2$ is also strictly positive and bounded above by approximately 3.88. Consequently, the sum $\gamma_1 + \gamma_2$ is non-negative and uniformly bounded.\\
\noindent Since the simulation terminates at the FPT $\tau_\beta$, it is sufficient that these assumptions hold up to that time. Because the process remains below the threshold $\beta(t) = -t + 1$, which is bounded above by 1, we have $Y^{i,\infty}_t < 1$ for all $t < \tau_\beta$. Therefore, on this interval, the function $A(t, (y, z_i))$ is also bounded above by $2.6$.

\noindent Finally, we apply Algorithm~\ref{alg:final algorithm} to simulate samples of $\min(\tau_\beta, T_f)$. To assess the accuracy of our method, we compare it against a high-resolution Euler–Maruyama method (i.e. using a very fine time discretisation). The corresponding densities results for different $s_{\text{min}}$ values are displayed in Figure~\ref{fig:example1_1}.

\begin{figure}[H]
    \centering
    \includegraphics[width=0.6\linewidth]{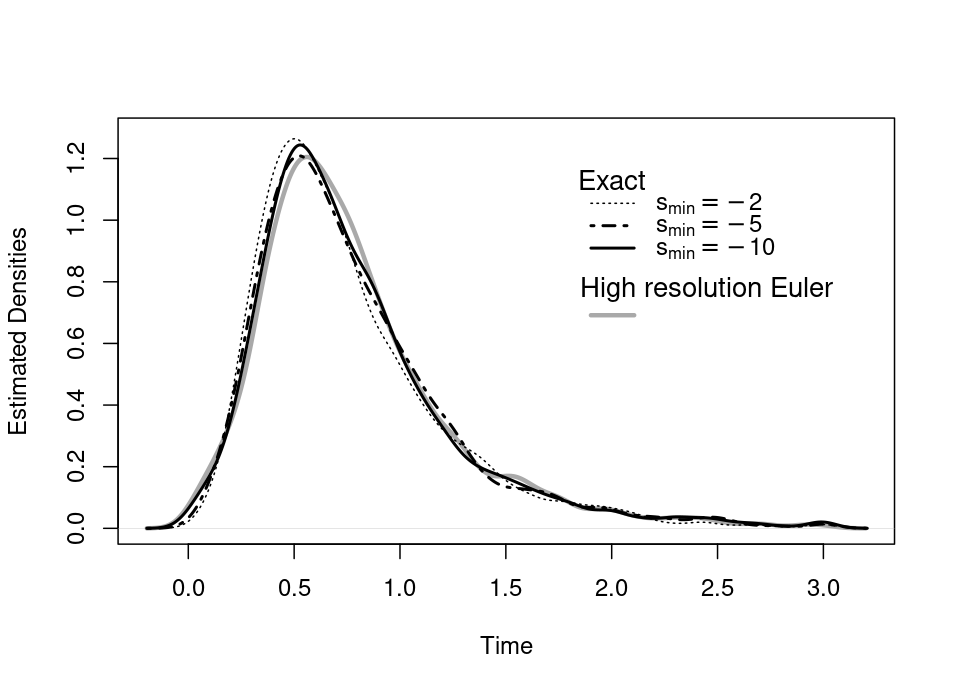}
    \caption{Density plots of FPT of the PDifMP in example 1, based on 3,000 simulations using two methods: (i) the Exact method for different values of the parameter $s_{\text{min}}$ and (ii) a high-resolution Euler–Maruyama scheme. The jump rate is fixed at $\lambda = 1$, and the terminal time is $T_f = 3$.}
    \label{fig:example1_1}
\end{figure}

\noindent The density curves produced by the Exact method show strong alignment with those from the Euler–Maruyama scheme, confirming the validity of the proposed algorithm. The choice of $s_{\text{min}}$ does not influence the accuracy much. In particular, the results for $s_{\text{min}} = -5$ and $s_{\text{min}} = -10$ are nearly indistinguishable from the benchmark.\\
To quantify the similarity between the simulated distributions, we perform a Kolmogorov–Smirnov (KS) test comparing the Exact and Euler–Maruyama outputs for various values of $s_{\text{min}}$:

\begin{table}[H]
\centering
\begin{tabular}{|c|c|c|c|}
\hline
$s_\text{min}$ & $-2$ & $-5$ & $-10$ \\
\hline
p-value  & $0.0004$  & $0.2242$  & $0.3369$  \\
\hline
\end{tabular}
\caption{KS test p-values comparing the Exact method with the Euler–Maruyama}
\label{tab:s_pvalue}
\end{table}

\noindent The results in Table~\ref{tab:s_pvalue} show that the p-values are comfortably above common significance levels. This provides empirical support for the theoretical convergence result established in Theorem~\ref{thm:valid_alg_2}, and confirms that moderately small values of $s_{\text{min}}$ already yield reliable approximations in practice.
\subsection*{Example 2: time-inhomogeneous PDifMP}

We consider now the following PDifMP $U_t = (Y_t, Z_t)$, with dynamics defined as follows. Between jumps, the system evolves according to the SDE:
\begin{equation}
\left\{
\begin{array}{ll}
dY_t^{i} = \left(Z_t^i + \frac{1}{2} \sin(t + Y_t^i)\right) dt + dB_t, & \quad t \in [T_i, T_{i+1}), \\[0.65em]
dZ_t^i = 0. &
\end{array}
\right.
\label{Exmpl_2}
\end{equation}
\noindent The process is initialised at $Y_0^0 = -1$. At each jump time $T_{i+1}$, the process is updated as:
\begin{equation*}
\left\{
\begin{array}{ll}
Y^{i+1}_{T_{i+1}} = Y^{i}_{T_{i+1}} + j(Y^{i}_{T_{i+1}}, z_i), \\[0.4em]
Z^{i+1}_{T_{i+1}} = z_i,
\end{array}
\right.
\end{equation*}
where $z_i \sim \mathcal{U}(1.8, 3)$ i.i.d., and the jump function is given by $j(y, z) = \frac{1 - y}{z}$.
\noindent We consider the same linear threshold as in Example 1. The PDifMP $U_t = (Y_t, Z_t)$ is defined by the following local characteristics:
\begin{equation*}
\label{eq:example2_char}
\left\{
\begin{array}{ll}
\phi &= \text{solution of system \eqref{Exmpl_2}} \\[0.4em]
\lambda(u) &= \lambda, \\[0.4em]
\mathcal{Q}((y, z), \mathcal{A}) &= \mathbb{P}\left( \left(y + \frac{1 - y}{z'},\, \zeta\right) \in \mathcal{A} \right), \quad \zeta \sim \mathcal{U}(1.8, 3).
\end{array}
\right.
\end{equation*}
\noindent We verify that the PDifMP satisfies the assumptions required for our simulation framework:
\begin{itemize}
    \item The drift and diffusion coefficients satisfy Lipschitz continuity and linear growth conditions.
    \item The jump size function $j(y,z)$ satisfies a linear growth condition in $y$. The drift and diffusion coefficients meet the conditions ensuring non-explosion of the process \cite{karatzas1991brownian}, and since $z$ is constant, the jump size is bounded.
\end{itemize}
\noindent Furthermore, the tracking diffusion $Y^{i,\infty}$ on the interval $[T_i, \infty)$ satisfies:
\begin{equation*}
dY^{i,\infty}_t = \left(z_i + \frac{1}{2} \sin(t + Y^{i,\infty}_t)\right) dt + dB_t.    
\end{equation*}
The drift term is continuously differentiable and satisfies Novikov’s condition. In particular, we have:
\begin{align*}
\mathbb{E}\left[\exp\left( \frac{1}{2} \int_0^{\tau^{*}} \left(z_i + \frac{1}{2} \sin(h + Y_h)\right)^2 dh \right)\right]
&\leq \mathbb{E}\left[\exp\left( \frac{1}{2} \cdot (3.5)^2 \cdot \tau^{*} \right)\right],
\end{align*}
which is finite under integrability assumptions on $\tau^{*}$. Hence, Novikov’s condition is satisfied.\\
\noindent We now specify the functions $\gamma_1$, $\gamma_2$, and $A$ used in the rejection probability of the Exact method:
\begin{equation*}
\begin{aligned}
\gamma_1(t) &= z \\[0.4em]
\gamma_2(t, (y, z)) &= \frac{1}{2}\sin(t+y) + \frac{1}{2}\left(\frac{1}{2}\cos(t+y)+(\zeta_i + \frac{1}{2}\sin(t+y))^{2}\right), \\[0.4em]
A(t, (y, z)) &= z y -\frac{1}{2}\cos(t+y).
\end{aligned}    
\end{equation*}
\noindent It is straightforward to verify that all three functions are smooth and bounded from above on intervals preceding the threshold crossing time, ensuring the applicability of the Exact algorithm.\\
\noindent We apply Algorithm~\ref{alg:final algorithm} to simulate samples of $\min(\tau_{\beta}, T_f)$ for the PDifMP described above. To validate the accuracy of our method, we also simulate the same quantity using the high-resolution Euler–Maruyama method. The resulting densities are approximated and displayed in Figure~\ref{fig:example2}.

\begin{figure}[H]
    \centering
    \includegraphics[width=0.6\linewidth]{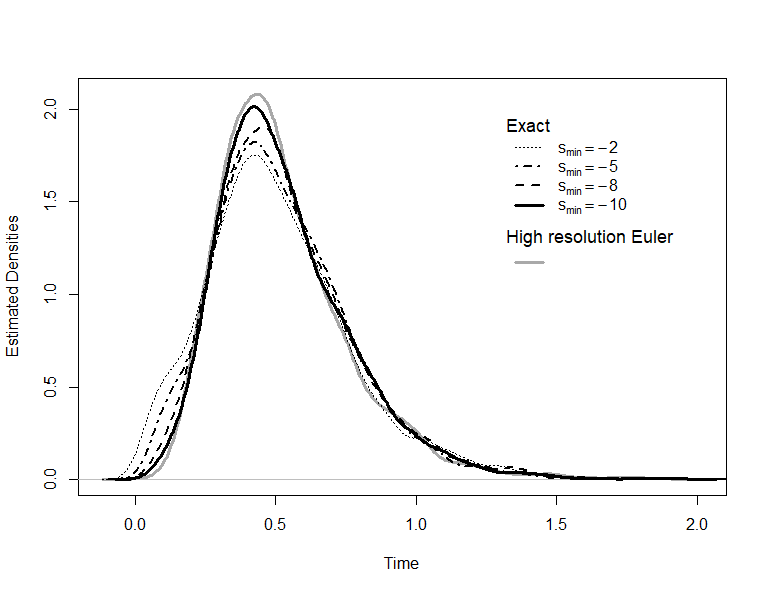}
    \caption{FPT densities for the PDifMP model including jump effects in both drift and diffusion, based on $3 \cdot 10^3$ simulations. Results are compared for (i) the Exact method with varying values of $s_\text{min}$, and (ii) the high-resolution Euler–Maruyama method. The jump rate is set to $\lambda = 1$.}
    \label{fig:example2}
\end{figure}

\noindent Figure~\ref{fig:example2} demonstrates a close similarity between the density curves produced by the Exact and Euler–Maruyama methods, particularly as the parameter $s_\text{min}$ becomes more negative. To assess this similarity quantitatively, we perform the KS test again to compare the empirical distributions obtained by both methods.

\begin{table}[H]
\centering
\begin{tabular}{|c|c|c|c|c|}
\hline
$s_\text{min}$ & -2 & -5 & -8 & -10 \\
\hline
p-value  & 0.0000  & 0.0055  & 0.2124 & 0.2764 \\
\hline
\end{tabular}
\caption{KS-test p-values comparing Exact method and Euler–Maruyama method}
\label{tab:s1_pvalue}
\end{table}

As shown in Table~\ref{tab:s1_pvalue}, the p-values increase significantly as $s_\text{min}$ becomes more negative. For values $s_\text{min} \leq -8$, the Exact method produces FPT distributions that are close to those obtained by the Euler–Maruyama method, confirming the theoretical convergence established in Theorem~\ref{thm:valid_alg_2}.

\begin{remark}
In these numerical examples, we use a linear threshold $\beta(t) = -t + 1$. This choice allows us to clearly isolate and assess the effects of the algorithm’s parameters $s_\text{min}$ and $\epsilon$. In particular, the first-passage time of Brownian motion to this linear boundary has an explicit distribution, eliminating numerical errors when simulating the auxiliary stopping time $\tau^*$. For more general time-dependent thresholds, we refer to the analysis in \cite{khurana2024exact}.
\end{remark}

\section{Conclusion} \label{sec:outlook}
In the present work, we extend the Exact simulation method for FPT in two crucial directions (i) for PDifMPs and (ii) to time-dependent thresholds. We use the algorithm proposed in \cite{herrmann2023exact}, which applies Exact method for SDEs interval-wise to simulate FPT of PDifMPs. Following this idea, we construct the algorithm that requires two key random quantities: (1) the FPT of SDE to time-dependent thresholds, and (2) the value of the SDE at a fixed time, conditional that it remains below the threshold upto that time.\\
While the interval-wise simulation framework originates from \cite{herrmann2023exact}, we extend it to the setting of generalised PDifMPs and time-dependent thresholds. Our goal is to simulate both components using the Exact method. The second component, in particular, presents significant challenges in the time-varying threshold case. To simulate it using Exact method we need the Brownian motion satisfying the condition as an auxiliary process and the corresponding probability weight. To address this we (i) introduce a method for simulating a point from the path of such a Brownian motion, (ii) use a Bessel Bridge to generate a full path passing through the simulated point, (iii) derive the rejection probability corresponding to this path, and (iv) complete the sampling procedure accordingly.\\
We identify sources of bias in the method and provide a theoretical proof of convergence. To validate the approach, we apply it to two distinct examples and compare the results with those obtained from a high-resolution Euler–Maruyama method. The comparison, based on density plots and statistical tests, confirms the convergence of our algorithm.\\
This method is particularly suitable for applications where high accuracy is critical and conventional time-discretisation techniques are inadequate, for instance in the case of high order SDEs.
\section*{Acknowledgements}
Devika Khurana thanks the Linz Center of Mechatronics GmbH for the financial support for this work.

\begin{appendices}

\section{}
\subsection{Proof of Theorem \ref{thm:girsanov_stopping} }
\label{Appendix_A}
\begin{proof}
We divide the proof into three steps: (i) showing that $(M_t)_{t \geq 0}$ is a true $\mathbb{P}$-martingale up to time $\mathcal{T}$; 
(ii) verifying that under $\mathbb{Q}$, the process $\tilde{B}_t := B_t + \int_0^t \alpha\big(s,\, (X^{i,\infty}_h, z_i)\big)\, dh$ is a Brownian motion; and  (iii) establishing the expectation transformation identity.

\paragraph{\textit{Step 1.}} For $z_i$ fixed in $E_2$, recall the exponential process $M_t$
\begin{equation*}
 M_t := \exp\left( - \int_0^t \alpha\big(h,\,(X^{i,\infty}_h, z_i)\big)\, dB_h - \frac{1}{2} \int_0^t \alpha^2(h,\, (X^{i,\infty}_h, z_i)\big)\, dh \right), \quad t \geq 0.   
\end{equation*}
\noindent Using Itô’s formula, we have:
\begin{equation*}
dM_t = -M_t \alpha\big(t,\, (X^{i,\infty}_t, z_i)\big)\, dB_t.    
\end{equation*}
\noindent Since $\mathcal{T}$ is a stopping time, the stopped process $(M_{t \wedge \mathcal{T}})_{t \geq 0}$ is a local martingale. 
By the Novikov condition \eqref{novik}, it is uniformly integrable, hence a true martingale, see \cite{oksendal2013stochastic} for more details. 
Therefore, the measure $\mathbb{Q}$ defined via
\begin{equation*}
\left. \frac{d\mathbb{Q}}{d\mathbb{P}} \right|_{\mathcal{F}_{\mathcal{T}}} = M_{\mathcal{T}}    
\end{equation*}
is well-defined and satisfies $\mathbb{Q} \sim \mathbb{P}$ on $\mathcal{F}_{\mathcal{T}}$.
\paragraph{\textit{Step 2.}} Under $\mathbb{Q}$, define the process
\begin{equation*}
 \tilde{B}_t := B_t + \int_0^t \alpha\big(h,\, (X^{i,\infty}_h, z_i)\big)\, dh, \quad 0 \leq t \leq \mathcal{T}.   
\end{equation*}
By Girsanov’s theorem (see e.g. \cite{oksendal2013stochastic}), for fixed $z_i$ values, $(\tilde{B}_t)_{0 \leq t \leq \mathcal{T}}$ is a standard Brownian motion under $\mathbb{Q}$. Hence, the original SDE under $\mathbb{P}$:
\begin{equation*}
 dX^{i,\infty}_t = \alpha\big(t,\, ( X^{i,\infty}_t, z_i)\big)\, dt + dB_t   
\end{equation*}
\noindent becomes, under $\mathbb{Q}$,
\begin{equation*}
dX^{i,\infty}_t = d\tilde{B}_t, \quad 0 \leq t \leq \mathcal{T}. \end{equation*}
This shows that $(X^{i,\infty}_t)_{0 \leq t \leq \mathcal{T}}$ behaves as a Brownian motion under $\mathbb{Q}$.

\paragraph{\textit{Step 3.}} Let $\Psi: C([0, \mathcal{T}]; \mathbb{R}) \to \mathbb{R}$ be a bounded, measurable functional. 
Using the definition of the Radon–Nikodym derivative \eqref{measure_transf}, we obtain:
\begin{equation*}
\mathbb{E}_{\mathbb{P}}\left[\Psi(X^{i,\infty}_h; 0 \leq h \leq \mathcal{T})\right]
= \mathbb{E}_{\mathbb{Q}}\left[\Psi(X^{i,\infty}_h; 0 \leq h \leq \mathcal{T}) M_{\mathcal{T}} \right].    
\end{equation*}
\noindent Since $M_{\mathcal{T}} > 0$ $\mathbb{P}$-almost surely and $\mathbb{E}_{\mathbb{P}}[M_{\mathcal{T}}] = 1$, 
the measures $\mathbb{Q}$ and $\mathbb{P}$ are equivalent on $\mathcal{F}_{\mathcal{T}}$. Note that absolute continuity may not hold beyond $\mathcal{T}$.
\end{proof}

\subsection{Proof of Theorem \ref{thm:Girsanovprob_FPT} }
\label{Appendix_B}

\begin{proof}
This proof follows the methodology of \cite{khurana2024exact}, with appropriate modifications tailored to our setting. We use Girsanov's transformation described in Theorem \ref{thm:girsanov_stopping}, resulting in the following equality:
\footnotesize{
\begin{equation}
    \mathbb{E}_{\mathbb{P}}\left[\Psi(\tau_{\beta}^{\text{cont},i})\mathbf{1}_{\{\tau_{\beta}^{\text{cont},i}<\infty\}}\right] 
    = \mathbb{E}_{\mathbb{Q}}\left[\Psi(\tau_{\beta}^{\text{cont},i}) \exp \left( \int_{T_{i}}^{\tau_{\beta}^{\text{cont},i}} \alpha\big(t,\,( {\tilde{X}^{i}_t},z_i)\big) dB_t - \frac{1}{2} \int_{T_{i}}^{\tau_{\beta}^{\text{cont},i}} \alpha^2\big(t,\,( {\tilde{X}_t^{i}}, z_i)\big) dt \right) \right].
    \label{eq:proof1}
\end{equation}}
\normalsize
\noindent To simplify the stochastic integral, we apply It\^o’s lemma to the function $A(t, (x,z))$ defined in Subsection \ref{subsection3.2}. Since $\frac{\partial}{\partial x}A(t, (x,z)) = \alpha(t, (x,z))$, we have
\begin{multline}
        \int_{T_{i}}^{\tau_{\beta}^{\text{cont},i}} \alpha\big(t,\,( {\tilde{X}_t^{i}}, z_i)\big) dB_t = A(\tau_{\beta}^{\text{cont},i}, \,({\tilde{X}_{\tau_{\beta}^{\text{cont},i}}^{i}},z_i)) - A(T_{i},\,( {\tilde{X}^{i}_{T_{i}}},z_i)) -\\ \int_{T_{i}}^{\tau_{\beta}^{\text{cont},i}} \left( \frac{\partial}{\partial t}A\big(t,\,( {\tilde{X}_t^{i}}, z_i)\big) + \frac{1}{2} \frac{\partial}{\partial x}\alpha\big(t,\,( {\tilde{X}_t^{i}}, z_i)\big) \right) dt.
\label{eq:proof2}
\end{multline}

\noindent Substituting \eqref{eq:proof2} into \eqref{eq:proof1}, we obtain
\small{
\begin{align}
\mathbb{E}_{\mathbb{P}}\left[\Psi(\tau_{\beta}^{\text{cont},i})\mathbf{1}_{\{\tau_{\beta}^{\text{cont},i}<\infty\}}\right] 
    &= \mathbb{E}_{\mathbb{Q}} \Bigg[ \Psi(\tau_{\beta}^{\text{cont},i}) 
    \exp \Bigg( A(\tau_{\beta}^{\text{cont},i},\,( \tilde{X}^i_{\tau_{\beta}^{\text{cont},i}},z_i)) - A(T_{i},\,( {\tilde{X}^{i}_{T_{i}}},z_i)) \notag \\
    &\quad - \int_{T_{i}}^{\tau_{\beta}^{\text{cont},i}} \left( \frac{\partial}{\partial t}A\big(t,\,( {\tilde{X}_t^{i}}, z_i)\big) + \frac{1}{2} \frac{\partial}{\partial x}\alpha\big(t,\,( {\tilde{X}_t^{i}}, z_i)\big) + \frac{1}{2} \alpha^2\big(t,\,( {\tilde{X}_t^{i}}, z_i)\big) \right) dt \Bigg) \Bigg].
\label{eq:proof3}
\end{align}}
\normalsize
\noindent Under the measure $\mathbb{Q}$, $\tau_{\beta}^{\text{cont},i}$ is the FPT of the process ${\tilde{X}^{i}}$, thus we have
\begin{equation*}
      \tilde{X}^i_{\tau_{\beta}^{\text{cont},i}} = \beta(\tau_{\beta}^{\text{cont},i}).
  \end{equation*}
\noindent Substituting this into \eqref{eq:proof3} gives
\footnotesize{
\begin{align*}
\mathbb{E}_{\mathbb{P}}\left[\Psi(\tau_{\beta}^{\text{cont},i})\mathbf{1}_{\{\tau_{\beta}^{\text{cont},i}<\infty\}}\right] 
    &= \mathbb{E}_{\mathbb{Q}} \Bigg[ \Psi(\tau_{\beta}^{\text{cont},i}) e^{ - A(T_{i},\,( {\tilde{X}^{i}_{T_{i}}},z_i))}
    \exp \Bigg( A(\tau_{\beta}^{\text{cont},i}, \, (\beta(\tau_{\beta}^{\text{cont},i}),z_i)) \notag \\
    &\quad - \int_{T_{i}}^{\tau_{\beta}^{\text{cont},i}} \left( \frac{\partial}{\partial t}A\big(t,\,( {\tilde{X}_t^{i}}, z_i)\big) + \frac{1}{2} \left( \frac{\partial}{\partial x}\alpha(t,\, ( {\tilde{X}_t^{i,\infty}}, z_i)) + \alpha^2\big(t,\,( {\tilde{X}_t^{i}}, z_i)\big) \right) \right) dt \Bigg) \Bigg].
\end{align*}}
\normalsize
\noindent Assumption~\ref{ass: on drift} allows us to write the following:
\begin{equation}
A(\tau_{\beta}^{\text{cont},i}, \,(\beta(\tau_{\beta}^{\text{cont},i}),z_i))=\int_{T_{i}}^{\tau_{\beta}^{\text{cont},i}} \left(\frac{\partial}{\partial t}A(t,\,( \beta(t),z_i)) + \alpha(t,\,( \beta(t), z_i))\beta'(t)\right) dt.
\label{eq: proof4}
\end{equation}
\noindent Rewriting the integral terms, using  equation \eqref{eq: proof4} and the pre-defined functions $\gamma_1$ and $\gamma_2$, we obtain
\begin{equation}
    \mathbb{E}_{\mathbb{P}}\left[\Psi(\tau_{\beta}^{\text{cont},i})\mathbf{1}_{\{\tau_{\beta}^{\text{cont},i}<\infty\}}\right] 
    = e^{-A(T_{i},\,(  {\tilde{X}^{i}_{T_{i}}},z_i)) + A(T_{i},\,( \beta(T_{i}),z_i))} \mathbb{E}_{\mathbb{Q}} \left[\Psi(\tau_{\beta}^{\text{cont},i}) \eta^{[2]}(\tau_{\beta}^{\text{cont},i}) \right],
\label{eq:proof_final}
\end{equation}
where $\eta^{[2]}(t)$ is defined in \eqref{eq:eta_def}.
\end{proof}
\end{appendices}

\bibliographystyle{plain}
\bibliography{Ref}
\end{document}